\newtheorem{theorem}{Theorem}[section]
\newtheorem{corollary}[theorem]{Corollary}
\newtheorem{proposition}[theorem]{Proposition}
\newtheorem{lemma}[theorem]{Lemma}
\newtheorem*{fact}{Fact}
\numberwithin{equation}{section}
\theoremstyle{definition}
\theoremstyle{remark}
\newtheorem{remark}[theorem]{Remark}
\newtheorem{remarks}[theorem]{Remarks}
\newtheorem*{remark*}{Remark}
\newcommand{\1}[1]{{\mathbf 1}{\{#1\}}}
\newcommand{\2}[1]{{\mathbf 1}{(#1)}}
\newcommand{\R}{{\mathbb R}}
\newcommand{\Z}{{\mathbb Z}}
\newcommand{\N}{{\mathbb N}}
\newcommand{\ZP}{{\mathbb Z}_+}
\newcommand{\RP}{{\mathbb R}_+}
\newcommand{\Sp}[1]{{\mathbb S}^{#1}}
\newcommand{\X}{{\mathbb X}}
\DeclareMathOperator{\Exp}{\mathbb{E}}
\renewcommand{\Pr}{{\mathbb P}}
\DeclareMathOperator{\sign}{sgn} 
\DeclareMathOperator{\trace}{tr}
\def\diag#1{\mathop{\mathrm{diag}}\left( #1 \right)}
\newcommand{\tra}{{\scalebox{0.6}{$\top$}}}
\newcommand{\per}{{\mkern -1mu \scalebox{0.5}{$\perp$}}}
\newcommand{\eps}{\varepsilon}
\newcommand{\rc}{{\mathrm{c}}}
\newcommand{\cB}{{\mathcal B}}
\newcommand{\cF}{{\mathcal F}}
\newcommand{\cG}{{\mathcal G}}
\newcommand{\as}{\ \text{a.s.}}
\newcommand{\bigmid}{\; \bigl| \;}
\newcommand{\Bigmid}{\; \Bigl| \;}
\newcommand{\bx}{{\mathbf{x}}}
\newcommand{\by}{{\mathbf{y}}}
\newcommand{\bz}{{\mathbf{z}}}
\newcommand{\bu}{{\mathbf{u}}}
\newcommand{\be}{{\mathbf{e}}}
\newcommand{\0}{{\mathbf{0}}}
\newcommand{\bzeta}{{\bm{\zeta}}}
\newcommand{\bra}{\langle}
\newcommand{\ket}{\rangle}
\def\namedlabel#1#2{\begingroup  
    (#2)%
    \def\@currentlabel{#2}%
    \phantomsection\label{#1}\endgroup
}
\begin{document}

\title{Anomalous recurrence properties of many-dimensional zero-drift random walks}
\author{Nicholas Georgiou\footnote{Department of Mathematical Sciences, Durham University, South Road, Durham DH1 3LE, UK.}
\footnote{Heilbronn Institute for Mathematical Research, School of Mathematics, University of Bristol, University Walk, Bristol, BS8 1TW.}
\and Mikhail V.\ Menshikov\footnotemark[1]
\and Aleksandar Mijatovi\'c\footnote{Department of Mathematics, Imperial College London,
180 Queen's Gate, London SW7 2AZ, UK.}
\and  Andrew R.\ Wade\footnotemark[1]}
 
\date{29 June 2015}
\maketitle

\begin{abstract}
Famously, a $d$-dimensional, spatially homogeneous random walk whose
increments are non-degenerate, have finite second moments, and have zero mean
is recurrent if $d \in \{1,2\}$ but transient if $d \geq 3$. Once spatial homogeneity
is relaxed, this is no longer true. We study a family of zero-drift spatially non-homogeneous random
walks (Markov processes)
whose increment covariance matrix is asymptotically constant along rays from the origin,
and which, in any ambient dimension $d \geq 2$, can be adjusted so that the walk is either transient or recurrent. Natural examples
are provided by random walks whose increments are supported on ellipsoids that are symmetric about the ray from the origin
through the walk's current position; these \emph{elliptic random walks} generalize the classical homogeneous Pearson--Rayleigh walk
(the spherical case). Our proof of the recurrence classification is based on fundamental work of Lamperti.
  \end{abstract}

\medskip

\noindent
{\em Key words:}  Non-homogeneous random walk; elliptic random walk; zero drift; recurrence; transience.

\medskip

\noindent
{\em AMS Subject Classification:} 60J05 (Primary) 60J10, 60G42, 60G50   (Secondary)

\section{Introduction}
\label{sec:intro}

  A $d$-dimensional
  random walk
that
  proceeds via a sequence of unit-length steps, each in an independent and
uniformly random direction,
  is sometimes
  called a {\em Pearson--Rayleigh} random walk (PRRW),
  after the exchange in the letters
  pages of {\em Nature} between
  Karl Pearson and Lord Rayleigh in 1905 \cite{pr}.
Pearson was interested in two dimensions and questions of migration of species (such as mosquitoes) 
\cite{pearson}, although Carazza has speculated that Pearson was a golfer \cite[p.~419]{carazza};
Rayleigh had earlier considered the acoustic `random walks' in phase space produced by combinations of sound waves of the same amplitude and random phases. 

The PRRW can be represented via
 partial sums of sequences of i.i.d.~random vectors that
  are 
   uniformly distributed on the unit sphere
$\Sp{d-1}$ in $\R^d$. Clearly the increments have mean zero, i.e., the PRRW has \emph{zero drift}.
 The PRRW has received some renewed interest recently
as a model for microbe locomotion \cite{berg,nossal,nw}.
Chapter 2 of \cite{hughes} gives a general 
  discussion of these walks,
which  have been well-understood for many years. In particular,   it is well known
that the PRRW is recurrent for $d \in \{1, 2\}$ and transient if $d \geq 3$.
  
Suppose that we replace the spherically symmetric increments of the PRRW by
increments that instead have some \emph{elliptical} structure, while retaining the zero drift. 
For example,
one could take the increments to be uniformly distributed on the surface
of an ellipsoid of fixed shape and orientation, as represented by the picture
on the right of Figure~\ref{fig1}. More generally, one should view the ellipses in
Figure~\ref{fig1} as representing the \emph{covariance} structure of the increments of the walk
(we will give a concrete example later; the uniform distribution on the ellipse
is actually not the most convenient for calculations).

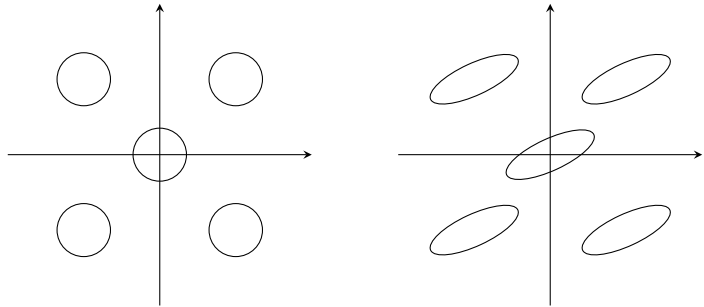
\begin{figure}[!h]
\center
\begin{tikzpicture}
\path[use as bounding box] (-2.5,-2.5) rectangle (2.5,2.5);
\path[->,>=stealth]
  (0,-2) edge (0,2)
  (-2,0) edge (2,0);
\foreach \x/\y in {-1/-1, -1/1, 1/-1, 1/1, 0/0}
  \draw (\x,\y) ellipse (10pt and 10pt);
\end{tikzpicture}
\begin{tikzpicture}
\path[use as bounding box] (-2.5,-2.5) rectangle (2.5,2.5);
\path[->,>=stealth]
 (0,-2) edge (0,2)
 (-2,0) edge (2,0);
\foreach \x/\y in {-1/-1, -1/1, 1/-1, 1/1, 0/0}
  \draw[rotate around={25:(\x,\y)}] (\x,\y) ellipse (18pt and 6pt);
\end{tikzpicture}
\caption{Pictorial representation of spatially homogeneous random walks with increments
distributed on a fixed circle ({\em left}) and a fixed ellipse ({\em right}).}
\label{fig1}
\end{figure}

A little thought shows that the walk represented by the picture on the right
of Figure~\ref{fig1} is  essentially no different to the PRRW: an affine transformation
of $\R^d$ will map the walk back to a walk whose increments have the same covariance structure
as the PRRW. To obtain genuinely different behaviour, it is necessary to abandon spatial homogeneity.

In this paper
  we consider a family of spatially \emph{non-homogeneous} random walks
with zero drift. These include 
generalizations of the PRRW 
in which the increments are not i.i.d.\ but have
a distribution supported on an ellipsoid of fixed size and
  shape but whose
  orientation depends upon the current position of the walk. Figure~\ref{fig2} gives representations
of two important types of example, in which the ellipsoid is aligned so that its
  principal axes are  parallel or perpendicular to
  the vector of the current position of the walk, which sits at the centre of the ellipse.
 
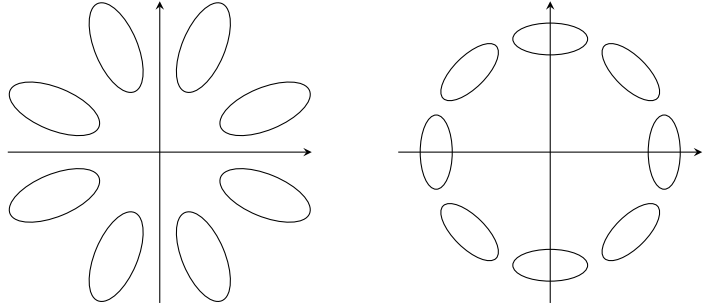
\begin{figure}[!h]
\center
\begin{tikzpicture}
\path[use as bounding box] (-2.5,-2.5) rectangle (2.5,2.5);
\path[->,>=stealth]
  (0,-2) edge (0,2)
  (-2,0) edge (2,0);
\foreach \angle in {22.5, 67.5, ..., 360}
  \draw[rotate=\angle] (0:1.5) ellipse (18pt and 8pt);
\end{tikzpicture}
\begin{tikzpicture}
\path[use as bounding box] (-2.5,-2.5) rectangle (2.5,2.5);
\path[->,>=stealth] (0,-2) edge (0,2)
(-2,0) edge (2,0);
\foreach \angle in {0, 45, ..., 359}
\draw[rotate=\angle] (0:1.5) ellipse (6pt and 14pt);
\end{tikzpicture}
\caption{Pictorial representation of spatially non-homogeneous random walks with increments
distributed on a radially-aligned ellipse with major axis aligned
in the radial sense ({\em left}) and in the transverse sense ({\em right}).}
\label{fig2}
\end{figure}

The random walks represented by Figure~\ref{fig2} are no longer sums
  of i.i.d.~variables. 
These modified walks can behave very differently to the PRRW.
For instance, one of the two-dimensional random walks represented in Figure~\ref{fig2}
is \emph{transient} while the other (as in the classical case) is recurrent. 
The reader who has not seen this kind of example before may take a moment to identify which is which.
It is this \emph{anomalous recurrence behaviour} that is the main subject of the present paper.
In the next section, we give a formal description of our model and state our
 main results.
  
We end this introduction with a brief comment on motivation. 
  In biology, the PRRW is more natural than a lattice-based
 walk for modelling the motion of  microscopic organisms, such as certain bacteria,
 on a surface. Experiment suggests that the locomotion of several kinds of
 cells consists of roughly straight line segments linked by discrete changes
 in direction:
 see, e.g.,~\cite{nossal,nw}. The generalization
 to elliptically-distributed increments  studied here represents movement on a surface
 on which either radial or transverse motion is inhibited.
 In chemistry and
 physics, the trajectory of a finite-step
 PRRW (also called a `random chain')
 is an idealized model of the growth of weakly interacting polymer molecules:
 see, e.g.,~\S2.6 of \cite{hughes}. The modification
 to ellipsoid-supported jumps represents polymer growth in a biased medium.

\section{Model and main results}
\label{sec:rws}

We work in $\R^d$, $d \geq 1$. Our main interest is in $d \geq 2$, as we shall explain shortly.
Write $\be_1, \ldots, \be_d$ for the standard orthonormal basis vectors
in $\R^d$.
 Write $\0$ for the origin in $\R^d$,
and let $\| \, \cdot \, \|$ denote the Euclidean norm and $\bra \,\cdot\, ,\!\, \cdot\, \ket$ the Euclidean inner product on $\R^d$.
Write $\Sp{d-1} := \{ \bu \in \R^d : \| \bu \| = 1\}$ for the unit sphere in $\R^d$.
For $\bx \in \R^d \setminus \{ \0 \}$, set $\hat \bx := \bx / \| \bx \|$;
also set $\hat \0 := \be_1$, for convenience. For definiteness, vectors $\bx \in \R^d$
are viewed as  column vectors throughout.

We now define $X=(X_n , n \in \ZP)$, 
a discrete-time, time-homogeneous Markov process on a (non-empty, unbounded) subset $\X$ of $\R^d$.
Formally, $(\X,\cB_\X)$ is a measurable space, $\X$ is a Borel
subset of $\R^d$, and $\cB_\X$ is the $\sigma$-algebra
of all $B \cap \X$ for $B$ a Borel set  in $\R^d$.
Suppose $X_0$ is some fixed (i.e., non-random) point in $\X$. 
Write
\[ \Delta_n := X_{n+1} - X_n ~~ (n \in \ZP) \]
for the increments of $X$. By assumption, given $X_0, \ldots, X_n$,
the law of $\Delta_n$ depends only on $X_n$ (and not on $n$);
so often we ease notation by taking $n=0$ and writing just $\Delta$ for $\Delta_0$.
We also use the shorthand $\Pr_\bx [ \, \cdot \, ] = \Pr [ \, \cdot \, \! \mid X_0 = \bx]$
for probabilities when the walk is started from $\bx \in \X$; similarly we use $\Exp_\bx$ for the
corresponding expectations.

We make the following moments assumption:
\begin{description}
\item[\namedlabel{ass:moments}{A0}] There exists $p >2$ such that $\sup_{\bx \in \X} \Exp_\bx [ \| \Delta \|^p  ] < \infty$.
\end{description}
The assumption \eqref{ass:moments} ensures that $\Delta$ has a well-defined mean vector $\mu(\bx) := \Exp_\bx [ \Delta ]$, and we suppose that the random walk has \emph{zero drift}:
\begin{description}
\item[\namedlabel{ass:zero_drift}{A1}] Suppose that $\mu(\bx) = \0$ for all $\bx \in \X$.
\end{description}
The assumption \eqref{ass:moments} also ensures that $\Delta$ has a well-defined covariance matrix, which we denote by
$
 M (\bx) := \Exp_\bx [ \Delta \Delta^{\!\tra} ],
$
where $\Delta$ is viewed as a column vector. 
To rule out pathological cases, we assume that $\Delta$ is \emph{uniformly non-degenerate}, in the following sense.
\begin{description}
\item[\namedlabel{ass:unif_ellip}{A2}] There exists $v > 0$ such that $\trace(M(\bx)) = \Exp_\bx[ \| \Delta \|^2 ] \geq v$ for all $\bx \in \X$.
\end{description}
Note that  assumption~\eqref{ass:unif_ellip} is weaker than \emph{uniform ellipticity}, which in this context
usually means,  for some $\eps >0$, $\Pr_\bx[  \Delta \cdot \bu \geq \eps  ] \geq \eps$ for all $\bu \in \Sp{d-1}$ and all $\bx$.

Our main interest is in a recurrence classification. First, we state the following basic `non-confinement'
result.
\begin{proposition}\label{lem:lim_sup_infty}
Suppose that $X$ satisfies assumptions \eqref{ass:moments}, \eqref{ass:zero_drift} and \eqref{ass:unif_ellip}.  Then
\begin{equation}
\label{eqn:limsup=+infty}
\limsup_{n\to\infty} \| X_n \| = +\infty, \as
\end{equation}
\end{proposition}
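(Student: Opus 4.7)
The plan is to argue by contradiction: I would suppose that for some $K<\infty$ the event $F:=\{\sup_{n\in\ZP}\|X_n\|\leq K\}$ has positive probability, and derive a contradiction. First, from \eqref{ass:zero_drift} and \eqref{ass:unif_ellip},
\[
\Exp[\|X_{n+1}\|^2-\|X_n\|^2\mid \cF_n]=\Exp[\|\Delta_n\|^2\mid \cF_n]\geq v,
\]
so $Y_n:=\|X_n\|^2$ is a submartingale with drift uniformly at least $v>0$.

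Next I would introduce the stopping time $T:=\inf\{n:\|X_n\|>K\}$, so that $F=\{T=\infty\}$, and consider the stopped submartingale $\tilde Y_n:=Y_{n\wedge T}$. Its Doob decomposition reads $\tilde Y_n=\|X_0\|^2+A_n+M_n$, where the predictable, non-decreasing compensator
\[
A_n:=\sum_{k=0}^{n-1}\1{T>k}\,\Exp[\|\Delta_k\|^2\mid \cF_k]\geq v(n\wedge T)
\]
is readily computed from \eqref{ass:zero_drift}, and $M_n$ is a mean-zero martingale with $M_0=0$. On $F$ we have $A_n\geq vn$ while $\tilde Y_n=\|X_n\|^2\leq K^2$; hence, if one can show $M_n/n\to 0$ a.s., then on $F$ one would get $\tilde Y_n/n\geq v/2$ for all large $n$, contradicting boundedness. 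This will yield $\Pr[F]=0$ for every $K$, so $\sup_n\|X_n\|=\infty$ almost surely. A routine application of the Markov property at a fixed time $N$ (applied to the walk restarted from $X_N$) then upgrades this to $\sup_{n\geq N}\|X_n\|=\infty$ a.s.\ for every $N\in\ZP$, which is \eqref{eqn:limsup=+infty}.

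The remaining work is to establish $M_n/n\to 0$ a.s. Here the stopping at $T$ is essential: on $\{T>k\}$ the bound $\|X_k\|\leq K$ controls the cross term in $\Delta\tilde Y_k=2\bra X_k,\Delta_k\ket+\|\Delta_k\|^2$, giving
\[
|M_{k+1}-M_k|\leq 2K\|\Delta_k\|+\|\Delta_k\|^2+\Exp[\|\Delta_k\|^2\mid \cF_k],
\]
and the increment vanishes on $\{T\leq k\}$. Combined with \eqref{ass:moments} and Hölder's inequality, this yields $\sup_k\Exp[|M_{k+1}-M_k|^{p/2}]<\infty$. Since $p/2>1$, the series $\sum_k k^{-p/2}\Exp[|M_{k+1}-M_k|^{p/2}]$ converges, and a martingale strong law of the Marcinkiewicz--Zygmund / Chow type then delivers $M_n/n\to 0$ a.s.

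The main obstacle is precisely this moment control. The raw increments $\Delta Y_k$ of the squared norm contain the term $2\bra X_k,\Delta_k\ket$ which scales with $\|X_k\|$ and so is not uniformly $L^{p/2}$-bounded on $\X$. The truncation at $T_K$ replaces $\|X_k\|$ by the deterministic bound $K$ on $\{T>k\}$, and this single trick is what allows the $(p/2)$-th conditional moment of $\Delta M_k$ to be controlled by the assumed $p$-th moment of $\|\Delta_k\|$ alone, making the proof go through under the mild hypothesis $p>2$ of \eqref{ass:moments}.
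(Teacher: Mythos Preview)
Your argument is correct and takes a genuinely different route from the paper. The paper first proves a quantitative maximal inequality (Lemma~\ref{lem:d-dim-KOI}, a $d$-dimensional martingale version of Kolmogorov's ``other'' inequality): for the recentred process $Y_\ell = X_{m+\ell}-X_m$ one has $\Pr[\max_{0\leq\ell\leq n}\|Y_\ell\|\geq x\mid\cG_0]\geq 1-D(1+x)^2/n$. This is obtained via an explicit truncation of the \emph{process} (not merely stopping it) to manufacture uniform bounds, and then Proposition~\ref{lem:lim_sup_infty} is deduced by applying L\'evy's conditional Borel--Cantelli lemma on consecutive time-blocks of a fixed length $t(x)$. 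Your approach instead works directly with the Doob decomposition of the stopped submartingale $\|X_{n\wedge T}\|^2$ and a martingale strong law; the stopping at level $K$ plays exactly the role that the paper's truncation plays, namely taming the cross-term $2\bra X_k,\Delta_k\ket$ so that the martingale differences have uniformly bounded moments. Your route is shorter for the proposition as stated; the paper's route has the advantage that the quantitative bound of Lemma~\ref{lem:d-dim-KOI} is reused later (see~\eqref{eq:big_ball_escape} in the proof of Theorem~\ref{thm:null}) to control exit times from large balls. One cosmetic remark: your invocation of the Chow/Marcinkiewicz--Zygmund SLLN with exponent $p/2$ is cleanest when $p\leq 4$; for $p>4$ one simply uses exponent $2$ instead (the stopped martingale differences then have uniformly bounded conditional second moments), but the conclusion $M_n/n\to 0$ a.s.\ is unaffected.
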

We give the proof of Proposition~\ref{lem:lim_sup_infty} in Section~\ref{sec:non-confinement}; we actually prove more, namely that the hypotheses of Proposition~\ref{lem:lim_sup_infty}
ensure that a version of Kolmogorov's `other' inequality holds.
The fact \eqref{eqn:limsup=+infty} ensures that questions of the escape of trajectories  to infinity  are non-trivial. 
Indeed, we will give conditions under which one or other of the following two behaviours 
(which are not \emph{a priori}  exhaustive)
occurs:
\begin{itemize}
\item $\lim_{n \to \infty} \| X_n \| = +\infty$, a.s., in which case we say that $X$ is \emph{transient};
\item $\liminf_{n \to \infty} \| X_n \| \leq r_0$, a.s., for some constant $r_0 \in \RP$, when we say $X$ is \emph{recurrent}.
\end{itemize}
If $X$ is an irreducible time-homogeneous Markov chain on a locally finite state-space, these definitions reduce to the usual notions of transience and recurrence;
in general state-spaces, our approach allows us to avoid unnecessary technicalities concerning irreducibility. 

In dimension $d=1$, it is a consequence of the classical Chung--Fuchs theorem (see
\cite{cf} or Chapter 9 of \cite{kall})       
that a spatially \emph{homogeneous} random walk with zero drift is necessarily recurrent.
However, this is \emph{not} true for a spatially non-homogeneous random walk:
as observed by Rogozin and Foss \cite{rf78}, a counterexample is provided by a version of the `oscillating random walk'
of Kemperman \cite{kemperman}
in which the increment law is one of two distributions (with mean zero but infinite second moment) depending on the walk's present sign. Our conditions exclude these heavy-tailed phenomena, so that in $d=1$
recurrence is assured in our setting.

\begin{theorem}
\label{t:zero_drift_implies_recurrence}
 Suppose that $d=1$. Suppose that $X$ satisfies assumptions \eqref{ass:moments}, \eqref{ass:zero_drift}, and \eqref{ass:unif_ellip}.
Then $X$ is recurrent.
\end{theorem}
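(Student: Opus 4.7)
The plan is to apply a Lamperti-type recurrence criterion to the non-negative one-dimensional process $Y_n := |X_n|$. Such a criterion states, roughly, that if $Y_n \geq 0$ is adapted, has uniformly bounded $(2+\delta)$-moment increments, satisfies $\liminf_{y\to\infty}\Exp[(Y_{n+1}-Y_n)^2 \mid Y_n = y] > 0$, and has drift obeying
\[
\limsup_{y\to\infty}\Bigl(2y\,\Exp[Y_{n+1}-Y_n\mid Y_n=y] - \Exp[(Y_{n+1}-Y_n)^2\mid Y_n = y]\Bigr) < 0,
\]
then $\liminf_n Y_n \leq r_0$ a.s.\ for some deterministic $r_0 < \infty$, which is precisely the notion of recurrence used in this paper. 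Proposition~\ref{lem:lim_sup_infty} complements this by ensuring the claim is non-vacuous, as it gives $\limsup_n Y_n = +\infty$ a.s.

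The work is the moment analysis of $|X+\Delta|-|X|$. Assume $X_0 = x > 0$ (the case $x < 0$ is symmetric) and set $A := \{\Delta < -x\}$. Off $A$, $|x+\Delta|-x = \Delta$; on $A$, $|x+\Delta|-x = -(2x+\Delta)$. Expanding the expectations and cancelling $\Exp_x[\Delta]$ via \eqref{ass:zero_drift} yields
\begin{align*}
\Exp_x\bigl[|x+\Delta|-x\bigr] &= -2\,\Exp_x\bigl[(x+\Delta)\,\1{A}\bigr], \\
\Exp_x\bigl[(|x+\Delta|-x)^2\bigr] &= \Exp_x[\Delta^2] + 4x\,\Exp_x\bigl[(x+\Delta)\,\1{A}\bigr].
\end{align*}
On $A$, both $x$ and $|x+\Delta|$ are bounded by $|\Delta|$, so $\bigl|(x+\Delta)\,\1{A}\bigr| \leq |\Delta|\,\1{|\Delta|\geq x}$ and $\bigl|4x(x+\Delta)\,\1{A}\bigr| \leq 4|\Delta|^2\,\1{|\Delta|\geq x}$. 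By \eqref{ass:moments} and Markov's inequality,
\[
\Exp_x\bigl[|\Delta|\,\1{|\Delta|\geq x}\bigr] = O(x^{-(p-1)}), \qquad \Exp_x\bigl[|\Delta|^2\,\1{|\Delta|\geq x}\bigr] = O(x^{-(p-2)}),
\]
the first being $o(1/x)$ (since $p > 2$) and the second $o(1)$. Hence the drift of $Y_n$ satisfies $\Exp_x[Y_1 - Y_0] = o(1/x)$, while by \eqref{ass:unif_ellip},
\[
\Exp_x\bigl[(|x+\Delta|-x)^2\bigr] = \Exp_x[\Delta^2] + o(1) \geq v/2
\]
for all sufficiently large $x$. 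The elementary bound $\bigl||x+\Delta|-|x|\bigr| \leq |\Delta|$ together with \eqref{ass:moments} also gives uniformly bounded $p$-th moment increments for $Y$, with $p > 2$.

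With all hypotheses verified, the Lamperti criterion produces $\liminf_n |X_n| \leq r_0$ a.s., which is the recurrence claim. No step poses a serious obstacle: the quantitative content is the one-line identity for $|x+\Delta|-x$ combined with Markov's inequality, and the hypothesis $p > 2$ in \eqref{ass:moments} is used exactly once, to make the tail contributions from the crossing event $A$ negligible on the two relevant scales (order $1/y$ for the drift, and order $1$ for the squared increment). The only mild care required is in selecting a formulation of Lamperti's criterion valid on general (uncountable) state-spaces, which is standard in the non-homogeneous random walk literature.
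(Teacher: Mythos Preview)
Your proof is correct, and the moment identities for $|x+\Delta|-x$ are clean and accurate. One minor notational slip: you state the Lamperti criterion with conditioning on $Y_n=y$, but $Y_n=|X_n|$ is not Markov; the hypotheses should be stated (as your actual computations are) in terms of $\mu_1(x)=\Exp_x[\,|x+\Delta|-|x|\,]$ and $\mu_2(x)=\Exp_x[\,(|x+\Delta|-|x|)^2\,]$, uniformly over all $x$ with $|x|$ large. This is exactly the formulation used in Lamperti~\cite{lamp1} and in the paper's proof of Theorem~\ref{thm:recurrence}, so the fix is purely cosmetic.

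The paper's own argument is different in presentation: rather than invoking Lamperti's Theorem~3.2 as a black box, it gives a self-contained proof via the Lyapunov function $f(x)=\log(1+|x|)$, showing directly that $\Exp_x[f(x+\Delta)-f(x)]\leq 0$ for all large $|x|$ and then concluding by supermartingale convergence together with Proposition~\ref{lem:lim_sup_infty}. Your exact decomposition of $|x+\Delta|-x$ according to whether the walk crosses zero is arguably more elementary than the paper's Taylor expansion of $\log(1+\,\cdot\,)$, and it plugs straight into the same Lamperti machinery the paper already uses for $d\geq 2$. The paper's route, on the other hand, is self-contained and makes the supermartingale structure explicit. The two are closely related under the hood: Lamperti's proof of his criterion proceeds precisely via a logarithmic Lyapunov function of this type.
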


Theorem~\ref{t:zero_drift_implies_recurrence} is essentially contained in a result of Lamperti~\cite[Theorem 3.2]{lamp1}; we give a self-contained proof below.
Theorem~\ref{t:zero_drift_implies_recurrence} shows that in $d=1$, under mild conditions,
the classical Chung--Fuchs recurrence classification for homogeneous
zero-drift random walks extends to zero-drift non-homogeneous random walks.
The purpose of the present paper is to demonstrate a natural family of examples in dimension $d \geq 2$
where this extension fails, and hence exhibit the following.

\begin{fact}
There exist spatially non-homogeneous random walks 
whose increments
are non-degenerate, have uniformly bounded second moments, and have zero mean,
which are \vskip-3mm
\begin{itemize}
\setlength\itemsep{-0.3em}
\vskip-4mm
\item transient in $d=2$;
\item recurrent in $d \geq 3$.
\end{itemize}
\end{fact}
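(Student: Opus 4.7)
The plan is to exhibit an explicit family of elliptic random walks---the obvious formalization of the pictures in Figure~\ref{fig2}---and then apply Lamperti's classical diffusive recurrence criterion to the radial process $R_n := \|X_n\|$. Concretely, on $\X := \R^d\setminus\{\0\}$, I would define the increment law, conditional on $X_n = \bx$, by
\[
\Delta \; = \; a\, U_0\, \hat\bx \; + \; b \sum_{i=1}^{d-1} U_i \, \bu_i(\bx),
\]
where $\{\bu_i(\bx)\}_{i=1}^{d-1}$ is a measurably-chosen orthonormal basis of $\hat\bx^{\per}$, the $U_0,\dots,U_{d-1}$ are i.i.d.\ bounded symmetric unit-variance random variables, and $a,b>0$ are two free parameters corresponding to the semi-axes of the covariance ellipsoid. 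Then $M(\bx) = a^2\hat\bx\hat\bx^{\!\tra} + b^2(I-\hat\bx\hat\bx^{\!\tra})$, and \eqref{ass:moments}--\eqref{ass:unif_ellip} all hold uniformly in $\bx$ by construction.

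Next I would analyse $R_n$ via Taylor expansion. Writing $Y := \bra\hat\bx,\Delta\ket$ and $W := \Delta-Y\hat\bx$, so that $\|X_n+\Delta\|^2 = (r+Y)^2 + \|W\|^2$ with $r:=\|X_n\|$, expanding the square root at large $r$ gives
\[
R_{n+1} - R_n \; = \; Y + \frac{\|W\|^2 - Y^2}{2r} + \text{(remainder)}.
\]
Taking $\Exp_\bx$ and using $\Exp_\bx[Y]=0$ from \eqref{ass:zero_drift}, together with $\Exp_\bx[Y^2] = a^2$ and $\Exp_\bx[\|W\|^2] = (d-1)b^2$ for our family, I would establish, uniformly over $\hat\bx$, that
\[
\Exp_\bx[R_{n+1}-R_n] = \frac{(d-1)b^2}{2r} + o(r^{-1}), \qquad \Exp_\bx[(R_{n+1}-R_n)^2] = a^2 + o(1).
\]
Hence $2r\Exp_\bx[R_{n+1}-R_n] - \Exp_\bx[(R_{n+1}-R_n)^2] \to c := (d-1)b^2 - a^2$ as $r\to\infty$; the moment hypothesis \eqref{ass:moments} with $p>2$ enters here in the standard way to absorb the cubic-in-$\Delta$ remainder terms.

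To conclude, I would invoke Lamperti's theorem \cite{lamp1} for the (non-Markov) process $R_n$: if the above limit $c$ is strictly positive then $R_n \to \infty$ a.s., so $X$ is transient; if $c<0$ then $\liminf_n R_n \leq r_0$ a.s.\ for some deterministic $r_0$, so $X$ is recurrent (with Proposition~\ref{lem:lim_sup_infty} precluding capture in a bounded set). The Fact then follows by tuning parameters within a single family: in $d=2$ take $b>a$, so $c = b^2 - a^2 > 0$ and $X$ is transient; in $d\geq 3$ take $a^2 > (d-1)b^2$ (e.g.\ $b=1$, $a^2=d$), so $c < 0$ and $X$ is recurrent.

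The principal obstacle is the careful application of Lamperti's criterion in the non-Markov setting: the conditional law of $R_{n+1}-R_n$ given $X_n$ depends on $X_n$ itself, not merely on $R_n$, so one needs the drift and second-moment asymptotics to hold \emph{uniformly} over the angular degrees of freedom (which is precisely why the family above is engineered so that $\Exp_\bx[Y^2]$ and $\Exp_\bx[\|W\|^2]$ are independent of $\hat\bx$). A secondary technical step is the upgrade of the direct Lamperti conclusion $\liminf_n R_n<\infty$ a.s.\ to the uniform form $\liminf_n R_n \leq r_0$ a.s.\ required by the definition of recurrence given earlier.
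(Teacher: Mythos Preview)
Your proposal is correct and follows essentially the same route as the paper: construct an explicit elliptic family with covariance $M(\bx)=a^2\hat\bx\hat\bx^{\!\tra}+b^2(I-\hat\bx\hat\bx^{\!\tra})$, compute the first two radial increment moments by Taylor expansion of $\|X_n+\Delta\|$, and feed the resulting Lamperti quantity $2r\mu_1-\mu_2\to(d-1)b^2-a^2$ into \cite{lamp1}; the paper does this in the generality of \eqref{ass:moments}--\eqref{ass:cov_form} (Lemmas~\ref{lemma1}--\ref{lemma2}) and then specializes (Corollary~\ref{cor:ellipsoid-Lamperti}), whereas you specialize first. One small slip: your displayed pointwise expansion should read $Y+\tfrac{\|W\|^2}{2r}+\cdots$, not $Y+\tfrac{\|W\|^2-Y^2}{2r}$ (the $-Y^2$ from the second-order term of $\sqrt{1+u}$ cancels the $+Y^2$ in $\|\Delta\|^2$), though your stated $\mu_1=\tfrac{(d-1)b^2}{2r}$ and the final Lamperti limit are correct.
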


Although certainly appreciated by experts, this fact is perhaps not as widely known as it might be. 
Zeitouni (pp.~91--92 of \cite{zeit}) describes an example of a transient zero-drift random walk on $\Z^2$, and states that the idea ``goes back to Krylov (in the context of diffusions)''. Peres, Popov and Sousi \cite{pps} investigate the minimal number of different increment distributions required for anomalous recurrence behaviour.

We now introduce our family of non-homogeneous random walks.
Write $\| \, \cdot \, \|_{\rm op}$ for the matrix (operator) norm given by $\| M \|_{\rm op} = \sup_{\bu \in \Sp{d-1}} \| M \bu \|$.
The following assumption on the asymptotic stability of the covariance structure of the process  along rays is central.
\begin{description}
\item[\namedlabel{ass:cov_limit}{A3}] Suppose that there exists a positive-definite matrix function $\sigma^2$
with domain $\Sp{d-1}$ such that, as $r \to \infty$,
\[
\eps(r) := \sup_{\bx \in \X : \| \bx \| \geq r} \| M( \bx ) - \sigma^2 ( \hat\bx ) \|_{\rm op} \to 0 .
\]
\end{description}
A little informally, \eqref{ass:cov_limit} says that $M (\bx) \to \sigma^2 (\hat \bx)$ as $\| \bx \| \to \infty$;
in what follows, 
we will often make similar statements, formal versions of which may be cast as in \eqref{ass:cov_limit}.

Note that \eqref{ass:unif_ellip} and \eqref{ass:cov_limit} together imply that $\trace(\sigma^2(\bu) ) \geq v >0$;  
next we impose a key assumption on the form of  $\sigma^2$ that is considerably stronger. To describe this, it is convenient to introduce the notation $\bra \, \cdot \, , \! \, \cdot \, \ket_{\bu}$ that defines,
for each $\bu \in \Sp{d-1}$, an inner product on $\R^d$ via
\[ \bra \by  , \bz \ket_{\bu} := \by^\tra \cdot \sigma^2 ( \bu ) \cdot \bz = \bra \by, \sigma^2(\bu)\cdot\bz \ket, ~~   \text{for }\by, \bz \in \R^d  .\]
\begin{description}
\item[\namedlabel{ass:cov_form}{A4}] 
Suppose that there exist constants $U$ and $V$ with $0 < U < V < \infty$
 such that, for all $\bu \in \Sp{d-1}$,
\[
\bra \bu , \bu \ket_{\bu} = U,  ~~\text{and}~~
\trace(\sigma^2(\bu) ) = V.
\] 
\end{description}
Informally, $V$ quantifies the total variance of the increments, while $U$ quantifies the
 variance in the radial direction; necessarily $U \leq V$. The assumption that $0 \neq U \neq V$
excludes some degenerate cases. As we will see, one possible way to satisfy condition \eqref{ass:cov_form} is to suppose
that the eigenvectors of $\sigma^2(\bu)$ are all parallel or perpendicular to the vector
$\bu$, and that the corresponding eigenvalues are all constant as $\bu$ varies; the level sets of the corresponding
quadratic forms $q_\bu(\bx) := \bra \bx,\bx \ket_\bu$ for $\bu \in \Sp{d-1}$ are then
ellipsoids like those depicted in Figure~\ref{fig2}.

Our main result is the following, which shows that
 both transience and recurrence are possible for \emph{any} $d \geq 2$,
 depending on parameter choices;
as seen in Theorem~\ref{t:zero_drift_implies_recurrence},
this possibility of anomalous recurrence behaviour
is a genuinely multidimensional phenomenon under our regularity conditions. 

\begin{theorem}
\label{thm:recurrence}
Suppose that $X$ 
satisfies \eqref{ass:moments}--\eqref{ass:cov_form}, with constants $0 < U < V$ as defined in \eqref{ass:cov_form}.  The following recurrence
classification is valid.
\begin{itemize}
\item[(i)] If $2U < V$, then $X$ is transient.
\item[(ii)] If $2U > V$, then $X$ is recurrent.
\item[(iii)]  If $2U = V$ and \eqref{ass:cov_limit} holds with $\eps(r) = O(r^{-\delta_0})$ for some $\delta_0 > 0$, then  $X$ is recurrent.
\end{itemize}
\end{theorem}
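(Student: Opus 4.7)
The plan is to reduce the $d$-dimensional recurrence classification to a one-dimensional Lamperti-type analysis of the radial process $Y_n := \|X_n\|$. First I would Taylor expand, for $\bx \in \X$ with $\|\bx\|$ large and $\Delta = X_1 - X_0$,
\[
\|\bx + \Delta\| - \|\bx\| = \langle \hat\bx, \Delta\rangle + \frac{\|\Delta\|^2 - \langle\hat\bx,\Delta\rangle^2}{2\|\bx\|} + R(\bx, \Delta),
\]
with $|R(\bx,\Delta)| = O(\|\Delta\|^3/\|\bx\|^2)$ on $\{\|\Delta\|\leq\|\bx\|/2\}$; the complementary event is negligible by \eqref{ass:moments} and Markov's inequality applied to the $p$-th moment bound. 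Taking $\Exp_\bx$, using \eqref{ass:zero_drift} to kill the linear term, and then invoking \eqref{ass:cov_limit} together with \eqref{ass:cov_form}, one obtains the asymptotic moments
\[
\mu(\bx) := \Exp_\bx[Y_1 - Y_0] = \frac{\trace M(\bx) - \hat\bx^\tra M(\bx)\hat\bx}{2\|\bx\|} + o(\|\bx\|^{-1}) = \frac{V-U}{2\|\bx\|} + o(\|\bx\|^{-1}),
\]
\[
b(\bx) := \Exp_\bx[(Y_1-Y_0)^2] = \hat\bx^\tra M(\bx)\hat\bx + o(1) = U + o(1).
\]
The Lamperti quantity is therefore $\alpha(\bx) := 2\|\bx\| \mu(\bx) - b(\bx) \to V - 2U$.

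For case (i), $V > 2U$ gives $\alpha(\bx) \geq (V-2U)/2 > 0$ for $\|\bx\|$ large, and Lamperti's transience theorem~\cite{lamp1} (whose hypotheses use the $p$-th moment bound in \eqref{ass:moments}) yields $Y_n \to \infty$ a.s., so $X$ is transient. For case (ii), $V < 2U$ makes $\alpha(\bx) \leq (V-2U)/2 < 0$ eventually, and Lamperti's recurrence criterion, combined with Proposition~\ref{lem:lim_sup_infty} to exclude confinement in a bounded set, gives $\liminf Y_n < \infty$ a.s., i.e., $X$ is recurrent.

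Case (iii), the critical case $V = 2U$, is the main obstacle because $\alpha(\bx) \to 0$ without a determinate sign and first-order Lamperti criteria become inconclusive. Here I would exploit the strengthened assumption $\eps(r) = O(r^{-\delta_0})$ to upgrade the error estimate to $|\alpha(\bx)| = O(\|\bx\|^{-\delta})$ for some $\delta > 0$, and then employ the Lyapunov function $f(y) = (\log y)^\beta$ with $\beta \in (0,1)$. A second-order Taylor expansion of $f$ together with the moment asymptotics above gives
\[
\Exp_\bx[f(Y_1) - f(Y_0)] = \frac{\beta (\log \|\bx\|)^{\beta - 2}}{2\|\bx\|^2} \Bigl( \alpha(\bx) \log \|\bx\| + (\beta - 1) b(\bx) \Bigr) + \text{lower-order terms},
\]
and since $\alpha(\bx)\log\|\bx\| = o(1)$ while $(\beta-1)b(\bx) \to (\beta-1)U < 0$, the right-hand side is eventually strictly negative. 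Thus $f(Y_n)$ is a supermartingale outside a bounded set; because $f(y) \to \infty$ with $y$, a Foster-type optional stopping argument rules out $Y_n \to \infty$, and combining with Proposition~\ref{lem:lim_sup_infty} yields recurrence. The delicate part is controlling all error contributions simultaneously: the Taylor remainder for $\|\cdot\|$ (via \eqref{ass:moments}), the convergence $M(\bx) \to \sigma^2(\hat\bx)$ (via the polynomial rate in \eqref{ass:cov_limit}), and the second-order expansion of $f$ itself, must all be dominated by the leading $(\beta-1)U$ term; the polynomial hypothesis is exactly what is needed for $\alpha(\bx)\log\|\bx\|$ not to swamp this leading negative contribution.
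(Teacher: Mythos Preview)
Your proposal is correct and, for cases~(i) and~(ii), matches the paper exactly: compute the first two radial increment moments via a Taylor expansion of $\|\bx+\Delta\|-\|\bx\|$ (the paper's Lemmas~\ref{lemma1} and~\ref{lemma2}), obtain $2\|\bx\|\mu_1(\bx)-\mu_2(\bx)\to V-2U$, and invoke Lamperti's classification~\cite{lamp1}. One technical caveat: your cubic remainder bound $O(\|\Delta\|^3/\|\bx\|^2)$ requires care because \eqref{ass:moments} only gives a $p$-th moment for some $p>2$, possibly $p<3$; the paper handles this by truncating at $\|\Delta\|\le\|\bx\|^{2/p}$ rather than $\|\bx\|/2$, which yields a clean $O(\|\bx\|^{-1-\delta})$ error without needing a third moment.

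The only substantive divergence is in case~(iii). The paper observes that under the polynomial-rate hypothesis one has $2\|\bx\|\mu_1(\bx)-\mu_2(\bx)=O(\|\bx\|^{-\delta_1})$ and then simply cites Lamperti's Theorem~3.2 in~\cite{lamp1}, which already contains the critical-case recurrence criterion. You instead unpack that criterion by hand, building the Lyapunov function $f(y)=(\log y)^\beta$ with $\beta\in(0,1)$ and verifying directly that $\Exp_\bx[f(Y_1)-f(Y_0)]<0$ for large $\|\bx\|$; your leading-order computation is correct, and the supermartingale-plus-Proposition~\ref{lem:lim_sup_infty} conclusion is the standard one. This buys self-containment at the cost of length, and is in fact essentially how Lamperti's theorem is proved; the paper's route is shorter but relies on the reader trusting the cited black box.
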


Moreover, we show that in any of the above cases, $X$ is \emph{null} in the following sense.
\begin{theorem}
\label{thm:null}
Suppose that $X$ 
satisfies \eqref{ass:moments}--\eqref{ass:cov_form}, with constants $0 < U < V$ as defined in \eqref{ass:cov_form}. Then, in any of the cases (i)--(iii) in Theorem~\ref{thm:recurrence}, for any bounded $A \subset \R^d$,  
\begin{equation}
\label{eq:null}
 \lim_{n \to \infty} \frac{1}{n} \sum_{k=0}^{n-1} \1 { X_k \in A } = 0, \as  \text{ and in } L^q \text{ for any } q \geq 1. 
\end{equation}
\end{theorem}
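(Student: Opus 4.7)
The strategy splits into the transient case~(i) and the two recurrent cases (ii)--(iii).

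Case~(i) is essentially immediate. By the definition of transience used in the paper, $\|X_n\|\to\infty$ almost surely; since any bounded $A\subset\R^d$ is contained in some ball $B(\0,R)$, the indicator $\1{X_k\in A}$ vanishes for every $k$ large enough, almost surely. Hence the Ces\`aro average in \eqref{eq:null} tends to $0$ almost surely, and the $L^q$ convergence for every $q\ge 1$ follows immediately from bounded convergence since $\frac{1}{n}\sum_{k=0}^{n-1}\1{X_k\in A}\in[0,1]$.

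For cases (ii) and (iii) the walk is recurrent and the content is to rule out \emph{positive} recurrence. The structural input is that the squared norm has strictly positive conditional drift: by \eqref{ass:zero_drift} and \eqref{ass:unif_ellip},
\[ \Exp[\|X_{n+1}\|^2-\|X_n\|^2\mid\cF_n] = \trace(M(X_n)) \ge v > 0, \]
and by \eqref{ass:cov_limit} together with \eqref{ass:cov_form} this drift tends to $V$ as $\|X_n\|\to\infty$. Expanding $\|X_n+\Delta_n\|$ to second order and using \eqref{ass:moments} to control the remainder (the same Taylor computation that drives the proof of Theorem~\ref{thm:recurrence}), one obtains the Lamperti-type asymptotics
\begin{align*}
\Exp\bigl[\|X_{n+1}\|-\|X_n\|\bigm|\cF_n\bigr] &= \frac{V-U}{2\|X_n\|}+o(\|X_n\|^{-1}), \\
\Exp\bigl[(\|X_{n+1}\|-\|X_n\|)^2\bigm|\cF_n\bigr] &= U+o(1),
\end{align*}
as $\|X_n\|\to\infty$, where the constant $U$ appearing in the quadratic variation arises precisely because $\bra\bu,\bu\ket_\bu=U$ in \eqref{ass:cov_form}. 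Applied to $R_n:=\|X_n\|$, Lamperti's one-dimensional classification~\cite{lamp1} places the threshold between positive and null recurrence at $2\cdot\frac{V-U}{2}<-U$, i.e.\ $V<0$, which is impossible since $V=\trace(\sigma^2(\bu))>0$. Thus in the recurrent cases the radial process $R_n$ is in fact \emph{null} recurrent, and a standard occupation-time estimate for null-recurrent Lamperti processes (via a ratio ergodic theorem or a direct Foster/Lyapunov comparison) yields $\frac{1}{n}\sum_{k=0}^{n-1}\1{R_k\le R}\to 0$ almost surely for every $R<\infty$. Since any bounded $A$ satisfies $\1{X_k\in A}\le\1{R_k\le R}$ for some $R$, the almost sure part of \eqref{eq:null} follows, and the $L^q$ convergence is then automatic from bounded convergence.

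The main obstacle is converting the positive drift of $\|X_n\|^2$ (which by itself only gives $\Exp[\|X_n\|^2]\ge vn$ and does not rule out a positive density of returns to the origin being compensated by rare very large excursions) into a \emph{pathwise} statement about occupation times. The route via the radial process $R_n$ works cleanly precisely because \eqref{ass:cov_form} pins the asymptotic radial variance at a universal constant $U$, placing $R_n$ inside the scope of Lamperti's sharp one-dimensional null-recurrence theory; without this radial uniformity of $\sigma^2$ the reduction to a one-dimensional problem would fail and a genuinely multidimensional occupation-time argument would be required.
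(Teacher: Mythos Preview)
Your treatment of case~(i) is identical to the paper's and is correct.

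For cases (ii)--(iii), however, there is a genuine gap. You correctly compute the Lamperti asymptotics for $R_n=\|X_n\|$ and correctly observe that the parameters $(V-U,U)$ place $R_n$ outside Lamperti's positive-recurrence region (since $2\cdot\tfrac{V-U}{2}+U=V>0$). But this by itself only tells you that the expected return time to a bounded set is infinite; it does \emph{not} give the pathwise occupation-time statement $\frac{1}{n}\sum_{k<n}\1{R_k\le R}\to 0$. Your appeal to ``a standard occupation-time estimate for null-recurrent Lamperti processes (via a ratio ergodic theorem or a direct Foster/Lyapunov comparison)'' is where the argument breaks: $R_n$ is \emph{not} a Markov process in general (the paper says so explicitly), so the ratio ergodic theorem is unavailable, and no Foster--Lyapunov argument that would yield an almost-sure Ces\`aro limit is specified. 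Infinite expected return time is compatible, in principle, with a positive density of visits on individual sample paths; ruling this out is the content of the proof, not a corollary of the Lamperti classification.

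The paper supplies exactly the missing mechanism. It proves a quantitative lower tail bound on the return time, $\Pr[\tau_r\ge m\mid X_0\notin B_R]\ge c\,m^{-1/2}$ (their Lemma~\ref{lem:null-estimate}), obtained by combining an optional-stopping argument for a truncated version of $R_n$ (to bound from below the probability of reaching level $x$ before $B_r$) with a maximal inequality for $(R_{\sigma_x+k}-R_{\sigma_x})^2$ (to show that descent from level $x$ takes time of order $x^2$). This is paired with a geometric upper bound on exit times from $B_R$ coming from the Kolmogorov-type inequality of Lemma~\ref{lem:d-dim-KOI}. An excursion decomposition then shows that the time spent in $B_r$ up to the $m$th excursion grows at most like $m^{1+\eps}$ while the total elapsed time grows at least like $m^{2-\eps}$, forcing the occupation ratio to zero. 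None of this is a consequence of the drift/variance identification alone; it is where the condition $U<V$ actually enters (to make $R_n$ a submartingale outside a bounded set, see~\eqref{eqn:diff-R-bound}). Your sketch reaches the doorstep of this argument but does not cross it.
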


\begin{remark}
\label{rem:UequalsV}
Theorems~\ref{thm:recurrence} and \ref{thm:null} both remain valid if we permit  $V = U >0$ in \eqref{ass:cov_form};
indeed, the condition $U < V$ is not used in the proof of Theorem~\ref{thm:recurrence} given below, so this case is recurrent, by Theorem~\ref{thm:recurrence}(ii).
The condition $U < V$ is used at one point to simplify the proof of Theorem~\ref{thm:null} given below, but a small
modification of the argument also works in the case $U=V$.
\end{remark}

The remainder of the paper is organised as follows.  In Section~\ref{sec:ell-rws} we
describe a specific family of examples called \emph{elliptic random walk models} that satisfy
assumptions \eqref{ass:moments}--\eqref{ass:cov_form} and exhibit both transient and recurrent behaviour
dependent on the parameters of the model.  We also present some simulated data that depicts the
random walks in both cases.  In Section~\ref{sec:non-confinement} we prove a
$d$-dimensional martingale version of Kolmogorov's other inequality and use that to prove the
non-confinement result (Proposition~\ref{lem:lim_sup_infty}).  In Section~\ref{sec:rw} we
prove the recurrence classification (Theorem~\ref{thm:recurrence}), and in
Section~\ref{sec:nullity} we prove Theorem~\ref{thm:null}.  In the appendix we prove
recurrence in the one-dimensional case (Theorem~\ref{t:zero_drift_implies_recurrence}).

Finally, we remark that in work in progress  we investigate diffusive scaling limits for random walks of the type described in the present paper;
the diffusions that appear as scaling limits possess certain pathologies from the point of view
of diffusion theory that make them interesting in their own right.

 \section{Example: Elliptic random walk model} 
  \label{sec:ell-rws}

Let $d \geq 2$.
We describe a specific model on $\X = \R^d$ where the jump distribution at $\bx \in \R^d$ is supported on an ellipsoid having one distinguished axis aligned with the vector $\bx$.
The model is specified by two constants $a,b >0$.  Construct $\Delta$ as follows.
Given $X_0 = \bx$, take $\bzeta$ uniform on $\Sp{d-1}$ and set
\begin{equation}\label{eqn:Delta-d-dim}
\Delta = Q_{\hat{\bx}} D \bzeta
\end{equation}
for $Q_{\hat{\bx}}$ an orthogonal matrix representing a transformation of $\R^d$ mapping
$\be_1$ to $\hat\bx$, and $D = \sqrt{d} \diag { a, b , \ldots, b } $.   See Figure~\ref{fig:Delta}. 

\begin{figure}[!h]
\begin{center}
\begin{tikzpicture}[thick]
\path[use as bounding box] (-2,-2.2) rectangle (10,2);

\draw (-0.5,0) circle (1);
\draw (-1.5,0) arc (180:360:1 and 0.4);
\draw[dotted] (0.5,0) arc (0:180:1 and 0.4);
\node at (-0.3,0.65) {$\bzeta$};
 
\draw (4,0) ellipse (1.2 and 2);
\draw (2.8,0) arc (180:360:1.2 and 0.6);
\draw[dotted] (5.2,0) arc (0:180:1.2 and 0.6);
\node at (4.2,1.2) {$D\bzeta$};

\draw[rotate around={-30:(9,0)}] (9,0) ellipse (1.2 and 1.8);
\draw[rotate around={-30:(9,0)}] (7.8,0) arc (180:360:1.2 and 0.75);
\draw[rotate around={-30:(9,0)},dotted] (10.2,0) arc (0:180:1.2 and 0.75);

\path[->,>=stealth] 
(-0.5,0) edge (0.1,0.6)
(4,0) edge (4.72,1.2)
[rotate around={-30:(9,0)}] (9,0) edge (9.72,1.08);

\node at (10,0.85) {$\Delta$};
\node at (9.2,-0.3) {$\bx$};
\node at (7.2,-2.5) {$\0$};
\path[thin, dashed, rotate around={-30:(9,0)}] (9,0) edge (9,-3);
\draw[fill=black, rotate around={-30:(9,0)}] (9,-3) circle (1pt);

\draw[double equal sign distance,-implies] (1,0) -- (2.4,0);
\draw[double equal sign distance,-implies] (5.7,0) -- (7.1,0);
\end{tikzpicture}
\end{center}
\caption{Definition of $\Delta = Q_{\hat \bx}D\bzeta$.}\label{fig:Delta}
\end{figure}
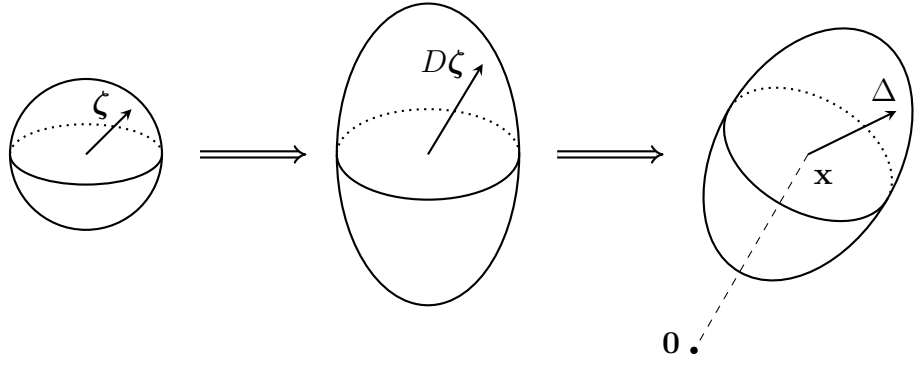

(Recall that $\hat\0 = \be_1$, so for $X_0 = \0$ we can take $Q_{\hat\bx} = I$ and $\Delta
= D\bzeta$.)  Thus $\Delta$ is a random point on an ellipsoid that has one distinguished
semi-axis, of length $a\sqrt{d}$, aligned in the $\hat\bx$ direction, and all other
semi-axes of length $b\sqrt{d}$.  Note that the law of $\Delta$ is well defined owing to
the spherical symmetry of the uniform distribution on $\Sp{d-1}$ and the fact that only
one axis of the ellipsoid is distinguished (for this reason it is enough to take any
$Q_{\hat{\bx}}$ satisfying $Q_{\hat{\bx}} \be_1 = \hat\bx$ in order to define $\Delta$;
see also Remark~\ref{rem:Hx} below).

Note also that $\Delta$ is not chosen to be uniformly distributed on the surface of the ellipsoid; this does not affect the range of asymptotic behaviour exhibited by the family of walks as $a$ and $b$ vary, but it does simplify the calculation of $M(\bx)$.  Indeed,
we have
\[
M(\bx) = \Exp_\bx[ \Delta \Delta^\tra] = \Exp[ Q_{\hat{\bx}} D \bzeta \bzeta^\tra D Q_{\hat{\bx}}^\tra ] = Q_{\hat{\bx}} D \Exp[ \bzeta \bzeta^\tra ] D Q_{\hat{\bx}}^\tra = \frac{1}{d} Q_{\hat{\bx}} D^2 Q_{\hat{\bx}}^\tra,
\]
by linearity of expectation, and using the fact that $\Exp[\bzeta\bzeta^\tra] = \frac{1}{d}I$ for $\bzeta$ uniformly distributed on $\Sp{d-1}$.
Also, a calculation similar to the above confirms that $\mu(\bx) = \0$ for all $\bx \in \R^d$, since $\Exp[\bzeta] = \0$.

Since $\| \Delta \|$ is bounded above by $\sqrt{d}\max\{a,b\}$, assumption
\eqref{ass:moments} holds.  Clearly \eqref{ass:zero_drift} and \eqref{ass:cov_limit} hold,
with $\sigma^2(\bu) =\frac{1}{d} Q_\bu D^2 Q_\bu^\tra$ for $\bu \in \Sp{d-1}$.  It is also
a simple matter to check that \eqref{ass:unif_ellip} and \eqref{ass:cov_form} hold: the matrix $\sigma^2(\bu)$ represented in coordinates for the orthonormal basis $\{ Q_\bu \be_1 = \bu, Q_\bu \be_2, \dots , Q_\bu \be_d \}$ is diagonal with entries $a^2, b^2, \dots, b^2$.  Indeed,
\[
\begin{split}
\sigma^2(\bu) = \frac{1}{d} Q_\bu D^2 Q_\bu^\tra &= Q_\bu [ b^2 I + (a^2-b^2) \be_1\be_1^\tra ] Q_\bu^\tra\\
&= a^2 \bu\bu^\tra  + b^2( I - \bu\bu^\tra ),
\end{split}
\]
and therefore $\bra \bu, \bu \ket_\bu = \bra \bu , \sigma^2(\bu) \cdot \bu\, \ket = a^2 > 0$
for all $\bu \in \Sp{d-1}$, and $\trace{(M(\bx))} = \trace{(\sigma^2(\hat\bx))} = a^2 +
(d-1)b^2 > 0$ for all $\bx \in \R^d$.

\begin{remark}\label{rem:Hx}
The seeming ambiguity in the definition of $\Delta$ due to the choice of $Q_{\hat\bx}$ can
be resolved by noting that $\Delta$ can be rewritten as
\[
\Delta = Q_{\hat\bx} D Q_{\hat\bx}^\tra Q_{\hat\bx} \bzeta = Q_{\hat\bx} D
Q_{\hat\bx}^\tra \tilde\bzeta,
\]
where $\tilde\bzeta = Q_{\hat\bx} \bzeta$ is also uniform on $\Sp{d-1}$ (this follows from the
spherical symmetry of the uniform distribution on $\Sp{d-1}$).  Moreover, the
symmetric matrix $H_{\hat\bx} := Q_{\hat\bx} D Q_{\hat\bx}^\tra$ is determined explicitly
in terms of $\hat\bx$:
\[
\begin{split}
H_{\hat\bx} = Q_{\hat\bx} D Q_{\hat\bx}^\tra &= Q_{\hat\bx}( b\sqrt{d} I + (a-b)\sqrt{d}\be_1\be_1^\tra) Q_{\hat\bx}^\tra \\
&= b\sqrt{d} I + (a-b)  \sqrt{d} \hat\bx \hat\bx^\tra.
\end{split}
\]
Consequently, we could choose to specify $\Delta$ explicitly as
\[
\Delta = H_{\hat\bx}\tilde\bzeta = b\sqrt{d}\tilde\bzeta + (a-b)\sqrt{d}\hat\bx\bra\hat\bx,\tilde\bzeta\ket,
\]
 with $\tilde\bzeta$ taken to be uniform on $\Sp{d-1}$.
As before, we find that $\Exp_{\bx}[\Delta] = H_{\hat\bx} \Exp[ \tilde\bzeta ] = \0$
and
\[
\Exp_\bx[ \Delta\Delta^\tra ] = H_{\hat\bx} \Exp[ \tilde\bzeta\tilde\bzeta^\tra
]H_{\hat\bx} = \textstyle\frac{1}{d}H_{\hat\bx}^2 = a^2\hat\bx\hat\bx^\tra + b^2( I -
\hat\bx\hat\bx^\tra ).
\]
\end{remark}

Recall that we assume our random walk to be time-homogeneous, so that
equation~\eqref{eqn:Delta-d-dim} in fact determines the distribution of $\Delta_n$ for all
$n \geq 0$.  Formally, we define $\bzeta_0, \bzeta_1,\dots$ a
sequence of independent random variables uniformly distributed on $\Sp{d-1}$, and for each
$n \geq 0$ we define $\Delta_n$ conditional on $\{X_n = \bx \}$ via
\begin{equation}\label{eqn:Delta-n-d-dim}
\Delta_n = Q_{\hat{\bx}} D \bzeta_n.
\end{equation}
We call $X = (X_n, n \in \ZP)$ defined in this way an \emph{elliptic random walk}.

As a corollary to Theorems~\ref{thm:recurrence} and \ref{thm:null}, we get the following recurrence classification for the elliptic random walk model. For this model the $\eps(r)$ in \eqref{ass:cov_limit} is identically zero so we get a complete classification that includes the boundary case.

   \begin{corollary}
   \label{cor:ellipsoid-Lamperti}
Let $d \geq 2$ and $a, b \in (0,\infty)$. Let $X$ be an elliptic random walk on $\R^d$. Then
   $X$ is transient if $a^2 < (d-1)b^2$ and null-recurrent if $a^2 \geq (d-1)b^2$.
   \end{corollary}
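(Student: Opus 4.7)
The plan is to reduce the corollary to a direct application of Theorems~\ref{thm:recurrence} and \ref{thm:null}. The discussion immediately preceding the corollary has already verified that the elliptic random walk satisfies assumptions \eqref{ass:moments}--\eqref{ass:cov_form}: the increments are bounded (hence \eqref{ass:moments} is automatic), the computation $\Exp[\bzeta] = \0$ gives \eqref{ass:zero_drift}, the identity $\sigma^2(\bu) = a^2 \bu\bu^\tra + b^2(I - \bu\bu^\tra)$ gives \eqref{ass:cov_limit} with $M(\bx) = \sigma^2(\hat\bx)$ for all $\bx \neq \0$ (so in particular $\eps(r) \equiv 0$), and the positivity $\trace(\sigma^2(\bu)) = a^2 + (d-1)b^2 > 0$ secures \eqref{ass:unif_ellip}.

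The core step is to identify the Lamperti-type constants $U$ and $V$ in \eqref{ass:cov_form} for this model. Using the formula $\sigma^2(\bu) = a^2\bu\bu^\tra + b^2(I-\bu\bu^\tra)$ already recorded in the text, I compute directly
\[
\bra \bu, \bu\ket_\bu = \bra \bu, \sigma^2(\bu)\cdot\bu\ket = a^2,
\qquad
\trace(\sigma^2(\bu)) = a^2 + (d-1)b^2,
\]
so that $U = a^2$ and $V = a^2 + (d-1)b^2$. Since $d\geq 2$ and $b>0$, we automatically have $U < V$, so the hypothesis of \eqref{ass:cov_form} is genuinely satisfied.

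Now I simply translate the trichotomy of Theorem~\ref{thm:recurrence} in terms of $a$ and $b$: the inequality $2U < V$ becomes $2a^2 < a^2 + (d-1)b^2$, i.e.\ $a^2 < (d-1)b^2$, giving transience by part (i); the inequality $2U > V$ becomes $a^2 > (d-1)b^2$, giving recurrence by part (ii); and $2U = V$ is $a^2 = (d-1)b^2$, which falls under part (iii) since the decay hypothesis on $\eps(r)$ is trivially met (indeed $\eps(r) \equiv 0$), again giving recurrence. Finally, in every case Theorem~\ref{thm:null} applies directly to yield the null property \eqref{eq:null}, so the recurrent cases are in fact null-recurrent. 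This completes the classification.

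There is no real obstacle here: the only content beyond invoking the general theorems is the bookkeeping that identifies $U = a^2$ and $V = a^2+(d-1)b^2$ and the observation that $\eps(r)\equiv 0$ so that the boundary case $2U=V$ is covered by part (iii). The whole argument is a half-page reduction.
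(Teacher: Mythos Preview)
Your proposal is correct and follows exactly the paper's approach: the paper does not give a separate proof of the corollary at all, treating it as immediate from the computations in Section~\ref{sec:ell-rws} (which identify $U=a^2$, $V=a^2+(d-1)b^2$, and note $\eps(r)\equiv 0$) together with Theorems~\ref{thm:recurrence} and~\ref{thm:null}. Your write-up is precisely the intended reduction.
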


In two dimensions we can explicitly describe the random walk as follows.  For $\bx \in \R^2$, $\bx \neq \0$ with $\bx = (x_1, x_2)$ in Cartesian components, set $\bx^\per :=
 (-x_2, x_1)$.
  Fix $a,b \in (0,\infty)$.
  Let $E_\bx (a,b)$ denote the ellipse 
  with centre $\bx$ and
   principal axes aligned
  in the $\bx$, $\bx^\per$ directions, with lengths
  $2\sqrt{2}a$, $2\sqrt{2}b$ respectively,
  given in parametrized form by  
  \begin{equation}
\label{param}
 E_\bx (a,b) := \left\{ \bx + \sqrt{2}a \frac{\bx}{\| \bx \|} \cos \phi
  + \sqrt{2}b \frac{\bx^\per}{\| \bx\|} \sin \phi : \phi \in (-\pi,\pi] \right\},
\end{equation}
and for $\bx = \0$ set
\[
E_\0 (a,b) := \left\{ \sqrt{2}a\, \be_1 \cos \phi + \sqrt{2}b\, \be_2 \sin \phi : \phi \in (-\pi,\pi] \right\}.
\]
The parameter $\phi$ in the parametrization (\ref{param}) should be interpreted with caution: it
is \emph{not}, in general, the central angle of the parametrized point on the ellipse.  

Given
  $X_n = \bx \in \R^2$,  $X_{n+1}$ is 
taken to be 
  distributed on $E_{\bx}(a,b)$,
`uniformly' with respect to the parametrization (\ref{param}).
Precisely, let $\phi_0, \phi_1, \ldots$ be
  a sequence of independent random variables
  uniformly distributed on $(-\pi,\pi]$. Then, on $\{ X_n \neq \0 \}$,
  \begin{equation}
  \label{elljumps}
   X_{n+1} = X_n + \sqrt{2}a \frac{X_n}{\| X_n\|} \cos \phi_n 
  + \sqrt{2}b \frac{X_n^\per}{\| X_n \|} \sin \phi_n ,\end{equation}
  while, on $\{ X_n = \0 \}$, 
  \begin{equation}
  \label{jump0}
  X_{n+1} = (\sqrt{2}a \cos \phi_n, \sqrt{2}b \sin \phi_n ) .\end{equation}

Figure~\ref{fig:sim} shows two sample paths of a simulation of the elliptic random walk in
$\R^2$ in the two cases of recurrence and transience.  In each picture the walk starts at the
origin at the centre of the picture; time is represented by the variation in colour (from
red to yellow, or from dark to light if viewed in grey-scale).
\begin{figure}[!h]
\begin{center}
\includegraphics[scale=1]{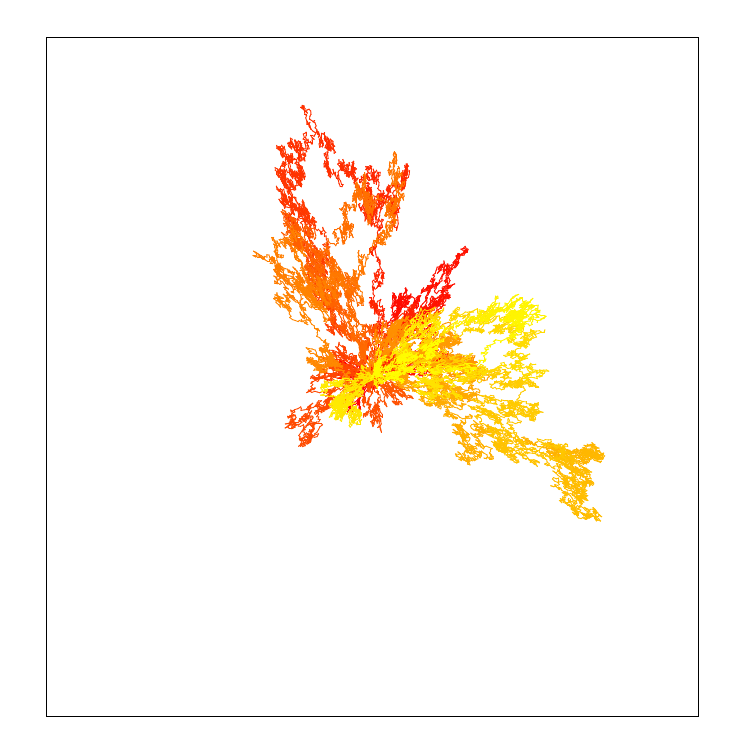}
\includegraphics[scale=1]{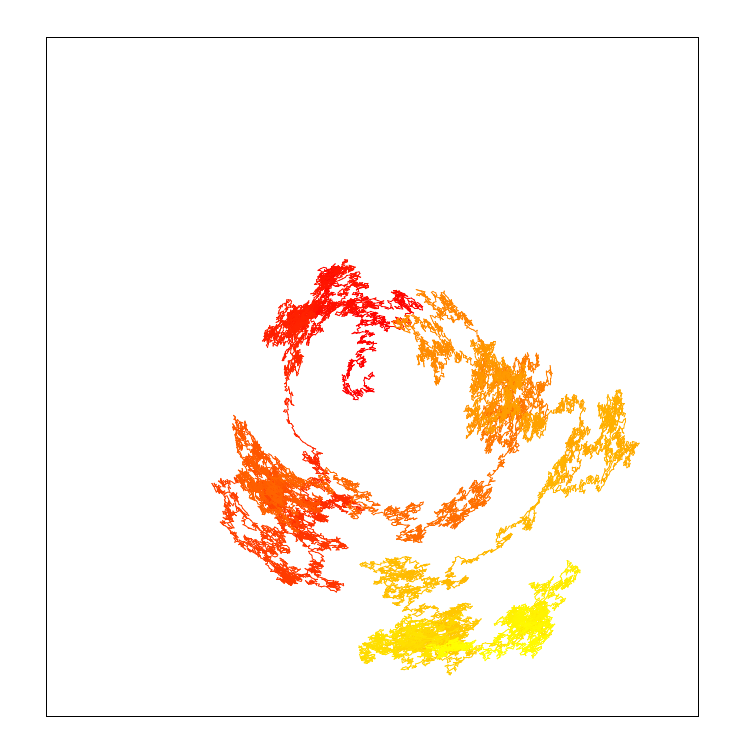}
\end{center}
\caption{Simulation of the elliptic random walk in $\R^2$ for the recurrent case $a > b$
  (\emph{left}) and the transient case $a < b$ (\emph{right}).}\label{fig:sim}
\end{figure}

\begin{remarks}\label{rem:ellipse}
\begin{enumerate}[(a)] 
\item   The process $X$ reduces to the classical PRRW when 
  $a = b$: in that case it is spatially homogeneous, i.e.,
  the distribution of the increment $X_{n+1}-X_n$ does not depend on $X_n$. For
  $a \neq b$ the random walk is not spatially homogeneous, and the jump
  distribution depends upon the projection onto the unit sphere
  of the walk's current position.

\item As mentioned earlier, we choose to take increments as defined at \eqref{elljumps},
  rather than increments that are uniform on the ellipse with respect to one-dimensional
  Lebesgue measure on $E_\bx (a,b)$, purely for computational reasons.  In fact, in two
  dimensions, since the Lebesgue measure on $E_\bx (a,b)$ coincides with the measure
  induced by taking $\phi$ uniformly distributed on $(-\pi,\pi]$ when $a = b$, and the
  case $a=b$ is critically recurrent, the qualitative behaviour will be the same in either
  case: the walk will be transient for $a<b$ and recurrent for $a \geq b$.  For higher
  dimensions, taking increments that are uniform with respect to the Lebesgue measure on
  $E_\bx^d(a,b) := \{ Q_{\hat\bx}D\bu : \bu \in \Sp{d-1} \}$ will still specify a family
  of models that exhibit a phase transition, from transience (for $a/b$ small) to
  recurrence (for $a/b$ large) but the exact shape of the ellipsoid in the critical case
  (i.e., the smallest ratio $a/b$ for which the walk is recurrent) may be different.

\item It follows from \eqref{eqn:Delta-n-d-dim} that
\begin{align}
\|X_{n+1}\|^2 &= \|X_n\|^2 + 2\|X_n\|\bra \widehat{X}_n , \Delta_n \ket + \| \Delta_n \|^2
  \nonumber\\
&= \|X_n\|^2 + 2\|X_n\|\bra \be_1, D \bzeta_n \ket + \bra \bzeta_n, D^2 \bzeta_n \ket \nonumber\\
&= \|X_n\|^2 + 2a\sqrt{d}\|X_n\|\bra \be_1, \bzeta_n \ket + (a^2-b^2)d \bra \be_1 , \bzeta_n \ket^2 + b^2d.
\label{eqn:norm-X}
\end{align}
In particular, for this family of models $(\|X_n\|,n \in \ZP)$ is itself a Markov process,
since the distribution of $\|X_{n+1}\|$ depends only on $\|X_n\|$ and not $X_n$; however,
in the general setting of Section~\ref{sec:rws}, this need not be the case. 

One-dimensional processes with evolutions reminiscent to that given by \eqref{eqn:norm-X}
have been studied previously by Kingman~\cite{kingman} and Bingham~\cite{bingham}.  Those processes
can be viewed, respectively, as the distance from its start point of a random walk in
Euclidean space, and the geodesic distance from its start point of a random walk on the
surface of a sphere, but in both cases the increments of the random walk have the property
that the distribution of the jump vector is a product of the independent marginal
distributions of the length and direction of the jump vector.  In contrast, for the
elliptic random walk the laws of $\|\Delta_n\|$ and $\bra \widehat X_n , \widehat\Delta_n \ket$
are \emph{not} independent (except when $a=b$).

\item\label{rem:Stas} The theory equally applies to the case where the ellipsoid specifying the jump distribution is oriented with some fixed angle $\alpha \in [0,\pi)$ with respect to the radial direction.  If we define $\Delta = Q^\alpha_{\hat \bx} D \bzeta$, where $Q^\alpha_{\hat \bx}$ is an orthogonal matrix that maps $\be_\alpha := \be_1 \cos{\alpha} + \be_2 \sin{\alpha}$ to $\hat \bx$, then we find that transience of $X$ is equivalent to
\[
(a^2 - b^2) \cos{2 \alpha} < (d-2) b^2.
\]
Note that for $d=2$, $Q^\alpha_{\hat\bx}$ and therefore $\Delta$ are well defined, but this is not so for higher dimensions.   Nevertheless, for \emph{any} collection of matrices $(Q^\alpha_{\bu}; \bu \in \Sp{d-1})$ satisfying $Q^\alpha_\bu \be_\alpha = \bu$ for all $\bu \in \Sp{d-1}$ we get the same recurrence classification. 
 This is because the distribution of $\| X_{n+1} \|$ given $X_n$ is determined through the angle $\alpha$ via
\[
 \|X_n\|^2 + 2\sqrt{d}\|X_n\|(a \bra \be_1, \bzeta_n \ket \cos{\alpha} + b \bra \be_2, \bzeta_n \ket \sin{\alpha} ) + (a^2-b^2)d\bra \be_1,\bzeta_n\ket^2 + b^2d,
\]
and therefore assumption \eqref{ass:cov_form} holds with $U = a^2\cos^2{\alpha} +
b^2\sin^2{\alpha}$ and $V = a^2 + (d-1)b^2$.
\end{enumerate}
\end{remarks}

\section{Non-confinement}
\label{sec:non-confinement}

In this section we prove that 
the assumptions \eqref{ass:moments}, \eqref{ass:zero_drift}, and \eqref{ass:unif_ellip} imply that
$\limsup_{n \to \infty} \|X_n\| = +\infty$, a.s.
We first present a general result for martingales on $\R^d$ satisfying a ``uniform dispersion'' condition; the result can be viewed as a $d$-dimensional martingale
version of \emph{Kolmogorov's other inequality} (see e.g.~\cite[pp.~123, 502]{gut}).

\begin{lemma}\label{lem:d-dim-KOI}
Let $d \in \N$. Suppose that $(Y_n, n \in \ZP)$
is an $\R^d$-valued process adapted to a filtration $(\cG_n , n \in \ZP)$,
with $\Pr [ Y_0 = \0 \mid \cG_0 ] =1$. 
Suppose that there exist $p>2, v>0, B <\infty$ such that for all $n \in \ZP$, a.s.,
\begin{align}
\Exp[ \| Y_{n+1} - Y_n \|^p \mid \cG_n ] &\leq B; \label{KOI:moments}\\
\Exp[ \| Y_{n+1} - Y_n \|^2 \mid \cG_n ] &\geq v;  \label{KOI:unif-ellip} \\
\Exp[   Y_{n+1} - Y_n  \mid \cG_n ] & = \0.       \label{KOI:zero-drift} 
\end{align}
Then there exists $D <\infty$, depending only on $B$, $p$, and $v$, such that for all $n\in\ZP$ and all $x\in \RP$,
\[
\Pr\Bigl[ \max_{0\leq \ell \leq n} \| Y_\ell \| \geq x \Bigmid \cG_0 \Bigr] \geq 1 - \frac{D(1+x)^2}{n}, \as
\]
\end{lemma}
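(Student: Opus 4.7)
The plan is to control the first-passage time $\tau := \inf\{ k \geq 0 : \| Y_k \| \geq x \}$; since $\{ \max_{0 \leq \ell \leq n} \| Y_\ell \| \geq x \} = \{ \tau \leq n \}$, and since $n \wedge \tau \geq n \1{\tau > n}$ gives $n \Pr[ \tau > n \mid \cG_0 ] \leq \Exp[ n \wedge \tau \mid \cG_0 ]$, it suffices to establish that $\Exp[ n \wedge \tau \mid \cG_0 ] \leq D (1+x)^2$ almost surely, for some constant $D = D(v,B,p)$.

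The natural tool is a two-sided estimate of $\Exp[ \| Y_{n \wedge \tau} \|^2 \mid \cG_0 ]$. For the \emph{lower} bound, expanding $\| Y_{k+1} \|^2$ around $Y_k$ and using \eqref{KOI:zero-drift} shows that
\[
M_n := \| Y_n \|^2 - \sum_{k=0}^{n-1} \Exp[ \| Y_{k+1} - Y_k \|^2 \mid \cG_k ]
\]
is a $(\cG_n)$-martingale with $M_0 = 0$; optional stopping at the bounded time $n \wedge \tau$ together with \eqref{KOI:unif-ellip} yields
\[
\Exp[ \| Y_{n\wedge\tau} \|^2 \mid \cG_0 ] \geq v \, \Exp[ n \wedge \tau \mid \cG_0 ].
\]
For the \emph{upper} bound, the elementary overshoot estimate $\| Y_{n \wedge \tau} \| \leq x + \| Y_\tau - Y_{\tau-1} \| \1{\tau \leq n}$ gives
\[
\| Y_{n \wedge \tau} \|^2 \leq x^2 + 2 x \, \xi + \xi^2, \qquad \xi := \| Y_\tau - Y_{\tau - 1} \| \1{\tau \leq n}.
\]

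The main obstacle is bounding the overshoot $\xi$ in the absence of a pointwise control on the increments; this is where the hypothesis $p > 2$ in \eqref{KOI:moments} is essential. Writing $T := \Exp[ n \wedge \tau \mid \cG_0 ]$ and using $\{\tau > k\} \in \cG_k$ together with \eqref{KOI:moments},
\[
\Exp[ \xi^p \mid \cG_0 ] \leq \sum_{k=0}^{n-1} \Exp[ \| Y_{k+1} - Y_k \|^p \1{\tau > k} \mid \cG_0 ] \leq B \sum_{k=0}^{n-1} \Pr[ \tau > k \mid \cG_0 ] = B T ,
\]
and then conditional Jensen's inequality applied to the concave map $y \mapsto y^{q/p}$ gives $\Exp[ \xi^q \mid \cG_0 ] \leq (BT)^{q/p}$ for $q \in \{ 1, 2 \}$. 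Inserting this into the two-sided estimate yields
\[
v T \leq x^2 + 2 x (BT)^{1/p} + (BT)^{2/p} .
\]
Applying the AM--GM inequality to the cross term ($2 x (BT)^{1/p} \leq x^2 + (BT)^{2/p}$) reduces this to $v T \leq 2 x^2 + 2 (BT)^{2/p}$. Since $2/p < 1$, the right-hand side is sublinear in $T$, so there exists $T_0 = T_0(v,B,p)$ such that $2(BT)^{2/p} \leq v T / 2$ whenever $T \geq T_0$, which forces $T \leq 4 x^2 / v$; while for $T < T_0$ the bound $T \leq D(1+x)^2$ is automatic as soon as $D \geq T_0$. Taking $D := \max(T_0, 4/v)$ completes the argument.
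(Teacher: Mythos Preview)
Your proof is correct and takes a genuinely different route from the paper's.

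The paper proceeds by \emph{truncation}: it replaces $Y_n$ by a process $W_n$ whose jumps are capped so that $\|W_n\| \leq A(1+x)$ always, checks that on $\{\|Y_n\| \leq x\}$ the process $\|W_{n\wedge\tau}\|^2$ still has submartingale drift at least $v/4$ (the truncation errors being controlled via the $p$th-moment bound), and then reads off the tail bound from the fact that a bounded submartingale with uniformly positive drift cannot run too long. Your argument avoids truncation entirely: you work with the exact quadratic martingale $\|Y_n\|^2 - \sum_k \Exp[\|\Delta_k\|^2 \mid \cG_k]$ stopped at $n\wedge\tau$, and instead control the overshoot $\xi = \|\Delta_{\tau-1}\|\1{\tau\leq n}$ via a Wald-type bound $\Exp[\xi^p \mid \cG_0] \leq B\,\Exp[n\wedge\tau \mid \cG_0]$, which after Jensen yields a self-referential inequality $vT \leq 2x^2 + 2(BT)^{2/p}$ that closes precisely because $2/p<1$. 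Your approach is more streamlined and in fact delivers the stronger conclusion $\Exp[\tau \mid \cG_0] \leq D(1+x)^2$ (letting $n\to\infty$); the paper's truncation, on the other hand, makes the role of the constant $A=A(B,p,v)$ completely explicit and keeps all estimates pointwise rather than passing through an implicit inequality in $T$. One small omission worth noting: you implicitly use $\tau\geq 1$ when writing $Y_{\tau-1}$, which is fine for $x>0$ since $Y_0=\0$; the case $x=0$ is trivial.
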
  
\begin{proof}
Let $x>0$ and set $\tau = \min\{ n \geq 0 : \| Y_n \| \geq x \}$;
throughout the paper we adopt the usual convention $\min \emptyset := \infty$. 
 In analogy with previous notation, write $\Delta_n = Y_{n+1} - Y_n$ for the jump distribution, and let
\[
W_n = \begin{cases}
Y_n &\text{if $\|Y_n\| \leq A(1+x)$},\\
Y_{n-1} + \widehat{\Delta}_{n-1}(A-1)(1+x) &\text{if $\|Y_n\| > A(1+x)$},
\end{cases}
\]
where $A>1$ is a constant to be specified later. Note that $W_n$ is $\cG_n$-measurable.

Now, on $\{ \|Y_n\| \leq x \}$, $W_n = Y_n$ and
 \begin{align*}
\Exp[W_{n+1} - W_n \mid \cG_n ] = {} &\Exp[ \Delta_n \mid \cG_n ]\\
& {} + \Exp[ \widehat{\Delta}_n\left((A-1)(1+x) - \|\Delta_n\|\right)\1{\|Y_{n+1}\| > A(1+x)} \mid \cG_n ]. 
\end{align*}

But $\{\|Y_{n+1}\| > A(1+x)\} \cap \{\|Y_n\| \leq x \}$ implies that $\|\Delta_n\| > (A-1)(1+x)$, and by
\eqref{KOI:zero-drift}, $\Exp[\Delta_n \mid \cG_n] = \0$.  Hence, on $\{\|Y_n\| \leq x\}$,
 \begin{align*}
\bigl\| \Exp[ W_{n+1} - W_n \mid \cG_n ] \bigr\| &\leq \Exp[ \|\Delta_n\| \1{ \|\Delta_n\| > (A-1)(1+x)} \mid \cG_n]\\
&\leq (A-1)^{-1}(1+x)^{-1} \Exp[ \|\Delta_n\|^2 \mid \cG_n ]\\
&\leq B'(A-1)^{-1}(1+x)^{-1}, \as,
\end{align*}
where, by \eqref{KOI:moments} and Lyapunov's inequality, $B' < \infty$ depends
 only on $B$ and $p$.  Hence we can choose $A \geq A_0$ for some $A_0 = A_0 (B, p, v)$ large enough
 so that
\begin{equation}
\label{eq:w-drift}
\bigl\| \Exp[ W_{n+1} - W_n \mid \cG_n ] \bigr\| \leq (v/8)(1+x)^{-1}, \text{ on } \{ \| Y_n \| \leq x \} .
\end{equation}
\label{eq:w-square}
Also, on  $\{\|Y_n\| \leq x\}$, by a similar argument,
 \begin{align}
\Exp[ \|W_{n+1} - W_n\|^2 \mid \cG_n ] \bigr\| &= \Exp[ \|\Delta_n\|^2 \mid \cG_n ] \nonumber\\
&\quad + \Exp[ \left( (A-1)^2(1+x)^2 - \|\Delta_n\|^2 \right)\1{ \|Y_{n+1}\| > A(1+x) } \mid \cG_n ] \nonumber\\
&\geq \Exp[ \|\Delta_n\|^2 \mid \cG_n ] - \Exp[ \|\Delta_n\|^2 \1{ \| \Delta_{n}\| > (A-1)(1+x) } \mid \cG_n ] \nonumber\\
&\geq v - (A-1)^{2-p}(1+x)^{2-p} \Exp[ \|\Delta_n\|^p \mid \cG_n ]\nonumber\\
&\geq v/2,
\end{align}
for all $x \geq 0$ and $A \geq A_1$ for
sufficiently large $A_1 = A_1 (B, p, v)$, 
using \eqref{KOI:moments} and \eqref{KOI:unif-ellip}.

Now, set $Z_n = \| W_{n \wedge \tau} \|^2$.  Then, on $\{ n < \tau \}$, by \eqref{eq:w-drift} and \eqref{eq:w-square},
\begin{align*}
\Exp[ Z_{n+1} - Z_n \mid \cG_n ] &= \Exp[ \|W_{n+1}\|^2 - \|W_n\|^2 \mid \cG_n ]\\
&= \Exp[ \|W_{n+1} - W_n\|^2 \mid \cG_n ] + 2 \bigl\bra W_n , \Exp[ W_{n+1} - W_n \mid \cG_n ] \bigr\ket\\
&\geq \frac{v}{2} - \frac{ 2 \|W_n\| v }{ 8(1+x) } \geq \frac{v}{2} - \frac{ v x }{ 4(1+x) } \geq \frac{v}{4}.
\end{align*}
Hence $Z_n - \sum_{k=0}^{n-1} v_k$ is a  $\cG_n$-adapted submartingale, where 
\[ v_k = \frac{v}{4} \1{k < \tau} \geq \frac{v}{4}  \1{n < \tau} , ~~ \text{for} ~ 0 \leq k < n  . \] 
 By construction, $0 \leq Z_n \leq A^2(1+x)^2$, so
\[
0 = \Exp [ Z_0 \mid \cG_0 ] \leq \Exp [ Z_n \mid \cG_0 ] - \sum_{k=0}^{n-1} \Exp [ v_k \mid \cG_0] 
 \leq A^2(1+x)^2 -  \sum_{k=0}^{n-1} \frac{v}{4} \Pr [ n < \tau \mid \cG_0 ],
\]
which implies $n(v/4)\Pr[ n < \tau \mid \cG_0 ] \leq A^2(1+x)^2$.  In other words,
\[
\Pr\Bigl[ \max_{0\leq \ell \leq n} \| Y_\ell \| < x \Bigmid \cG_0 \Bigr] \leq \frac{4A^2(1+x)^2}{v n} , \as \qedhere
\]
\end{proof}

Now we can give the proof of Proposition~\ref{lem:lim_sup_infty}.

\begin{proof}[Proof of Proposition~\ref{lem:lim_sup_infty}.]
It is enough to show that for all $x \in \RP$ the event $\{ \| X_n \| \geq x \}$ occurs
infinitely often.  For a given $x$, we will apply Lemma~\ref{lem:d-dim-KOI} to $Y_n = X_{m+n} - X_m$ with $\cG_n = \sigma(X_0,\dots,X_{m+n})$;
that result is applicable, since \eqref{ass:moments}, \eqref{ass:zero_drift} and \eqref{ass:unif_ellip}
imply \eqref{KOI:moments}, \eqref{KOI:zero-drift} and \eqref{KOI:unif-ellip}, respectively. Thus Lemma~\ref{lem:d-dim-KOI} shows that, for some finite $t = t(x)$, 
\begin{equation}
\label{eq:escape-chance}
\Pr{\Bigl[ \max_{0 \leq \ell \leq t-1} \| X_{m+\ell} - X_m \| \geq 2x \Bigmid  X_0,\dots,X_m  \Bigr]} \geq \frac{1}{2}, \as, 
\end{equation}
for all $m \geq 0$. 
For $k=1,2,\dotsc$, define the event
\[
A_k = \Bigl\{ \max_{0 \leq \ell \leq t-1} \|X_{(k-1)t + \ell} - X_{(k-1)t} \| \geq 2x \Bigr\},
\]
and filtration $\cG'_{k-1} = \sigma(X_0,\dots,X_{(k-1)t})$.  Then $A_k \in \cG'_k$, and, by \eqref{eq:escape-chance},
 $\Pr{[ A_k \mid \cG'_{k-1}]} \geq \frac{1}{2}$, a.s., 
for all $k$. An application of L\'evy's extension of the Borel--Cantelli lemma (see, e.g.,~\cite[Cor.~7.20]{kall}) shows that $A_k$ occurs infinitely often, a.s.
For each $k$ such that $A_k$ occurs, either
\begin{itemize}
\item $\| X_{(k-1)t} \| \geq x$, or
\item $\| X_{(k-1)t} \| \leq x$ and $\|X_n \| \geq x$ for some $(k-1)t < n < k t$.
\end{itemize}
Since one of these cases must occur for infinitely many $k$, we have that $\{ \| X_n \| \geq x\}$ occurs infinitely often, as required.
\end{proof}

\section{Recurrence classification}
\label{sec:rw}

In this section we study the random walk $X_n$ and give the proof of the recurrence classification, Theorem~\ref{thm:recurrence}. 
The method of proof is based on applying classical results of Lamperti~\cite{lamp1} to the $\RP$-valued radial process given by $R_n := \| X_n \|$.
The method rests on an analysis of the increments $R_{n+1} - R_n$ given $X_n =  \bx \in \X$;
in general, $R_n$ is not itself a Markov process.
 The following notation will be useful.
Given $\bx \neq \0$ and $\by \in \R^d$, write
\[
\by_\bx := \frac{\bra \bx, \by \ket}{\|\bx\|} = \bra \hat \bx, \by \ket,
\]
so that $\by_\bx$ is the component of $\by$ in the $\hat \bx$ direction, and $\by - \by_\bx\, \hat \bx$ is a vector perpendicular to $\hat \bx$. 

First we state a general result on the increments of $R_n$ for a Markov process $X_n$ on $\X$. Recall that we write $\Delta = X_1 - X_0$, and let $\Delta_\bx$ be the radial component of $\Delta$ at $X_0 = \bx$ in accordance with the notation described above; no confusion should arise with our notation $\Delta_n$ defined previously.

We make an important comment on notation.
When we write $O( \|\bx\|^{-1-\delta} )$, and similar expressions, these are understood to be uniform in $\bx$.  That is, if $f : \R^d \to \R$ and
$g : \RP \to \RP$, we write $f ( \bx ) = O ( g(\|\bx\|) )$ to mean that there exist $C \in \RP$ and $r \in \RP$ such that
\begin{equation}\label{eqn:big-O}
 | f (\bx) | \leq C  g( \| \bx \|)   \text{ for all } \bx \in \X \text{ with } \| \bx \| \geq r .
\end{equation}

  \begin{lemma}
  \label{lemma1}
 Suppose that $X$ is a discrete-time, time-homogeneous Markov process on $\X \subseteq \R^d$ satisfying \eqref{ass:moments} for some $p >2$.
   Then, for $R_n := \| X_n \|$, we have
  \begin{equation}
  \label{bound}
   \sup_{\bx \in \X} \Exp [ | R_{n+1} - R_n |^p \mid X_n = \bx ] < \infty,\end{equation}
and the radial increment moment functions satisfy
   \begin{align}
    \label{mu1}
 \mu_1(\bx) & :=  \Exp [ R_{n+1} - R_n \mid X_n = \bx ] 
  = \Exp_\bx [ \Delta_\bx  ] +
     \frac{ \Exp_\bx [ \| \Delta \|^2 - \Delta^2_{\bx} ] }
    {2 \| \bx \|} + O (\| \bx \|^{-1-\delta} ) ,\\
    \label{mu2}
 \mu_2(\bx) & :=  \Exp [ (R_{n+1}- R_n)^2 \mid X_n = \bx ]
    = \Exp_\bx [ \Delta_\bx^2 ] + O( \|\bx\|^{-\delta} ) , 
    \end{align}  
  as
  $\| \bx \| \to \infty$, for some $\delta=\delta(p) > 0$.
  \end{lemma}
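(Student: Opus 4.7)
The plan is to expand $R_{n+1} = \|\bx + \Delta\|$ via a Taylor series in $\Delta$, separating ``small'' and ``large'' jumps so that the $L^p$ bound \eqref{ass:moments} can absorb higher-order error terms.

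The $L^p$ bound \eqref{bound} is immediate: by the reverse triangle inequality $|R_{n+1}-R_n| \leq \|\Delta\|$, and \eqref{ass:moments} finishes the job. For the expansions \eqref{mu1} and \eqref{mu2}, fix $q \in (0, 1/2)$ and split each expectation across the event $\{\|\Delta\| \leq q\|\bx\|\}$ and its complement. On the large-jump event, Markov's inequality gives $\Pr_\bx(\|\Delta\| > q\|\bx\|) = O(\|\bx\|^{-p})$ and, by H\"older, $\Exp_\bx[\|\Delta\|^k \1{\|\Delta\| > q\|\bx\|}] = O(\|\bx\|^{k-p})$ for $k \in \{1,2\}$. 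Since $|R_{n+1} - R_n|^k \leq \|\Delta\|^k$, this bound handles both the large-jump contribution to $\mu_k(\bx)$ and the discrepancy between truncated and untruncated versions of the leading-order expectations in \eqref{mu1}--\eqref{mu2}; the errors are absorbed into $O(\|\bx\|^{-1-\delta})$ and $O(\|\bx\|^{-\delta})$ respectively, for any $\delta \leq p - 2$.

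On the small-jump event, write $R_{n+1} = \|\bx\|\sqrt{1+u}$ where $u = 2\Delta_\bx/\|\bx\| + \|\Delta\|^2/\|\bx\|^2$ satisfies $|u| \leq 2q + q^2 < 1$. Applying $\sqrt{1+u} = 1 + u/2 - u^2/8 + O(|u|^3)$ (with an explicit remainder valid for $|u|$ bounded away from $1$) and expanding $u^2$ in powers of $1/\|\bx\|$ yields, on $\{\|\Delta\|\leq q\|\bx\|\}$,
\[
R_{n+1} - R_n = \Delta_\bx + \frac{\|\Delta\|^2 - \Delta_\bx^2}{2\|\bx\|} + E_1(\bx,\Delta),
\qquad
(R_{n+1} - R_n)^2 = \Delta_\bx^2 + E_2(\bx,\Delta),
\]
where $E_1$ and $E_2$ are finite linear combinations of terms of the form $\|\Delta\|^j/\|\bx\|^i$ with $j \geq 3$, the worst cases being $\|\Delta\|^3/\|\bx\|^2$ in $E_1$ and $\|\Delta\|^3/\|\bx\|$ in $E_2$. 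Each such term is controlled in expectation by the truncated moment bound $\Exp_\bx[\|\Delta\|^j \1{\|\Delta\|\leq q\|\bx\|}] \leq (q\|\bx\|)^{(j-p)_+}\Exp_\bx[\|\Delta\|^{j\wedge p}] = O(\|\bx\|^{(j-p)_+})$, which delivers contributions of order $O(\|\bx\|^{-1-\delta})$ for $E_1$ and $O(\|\bx\|^{-\delta})$ for $E_2$ with $\delta := \min\{p - 2, 1\}$.

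The main obstacle is the case $p \in (2,3)$: here third and higher moments of $\|\Delta\|$ may be infinite, so one cannot integrate the Taylor series term by term. The truncation at level $q\|\bx\|$ is precisely the device that resolves this, converting a potential blow-up in unbounded moments into a polynomial loss in $\|\bx\|$ that, when balanced against the $\|\bx\|^{-i}$ denominators, still gives decay at the required rate $\|\bx\|^{-(p-2)}$.
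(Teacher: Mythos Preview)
Your argument is correct and follows the same overall strategy as the paper---truncate on the size of $\|\Delta\|$, Taylor-expand on the small-jump event, and use the $p$th-moment bound on the large-jump event---but with a different truncation threshold that changes the bookkeeping. The paper truncates at $\|\Delta\|\le\|\bx\|^\beta$ with $\beta=2/p$, so that on the good event every ratio $\|\Delta\|/\|\bx\|$ is already $o(1)$ and the Taylor remainder needs only the second moment; balancing against the bad-event cost $\|\bx\|^{\beta(1-p)}$ fixes $\beta$ and yields $\delta=(p-2)/p$. You instead truncate at a fixed fraction $q\|\bx\|$, which makes the bad event much rarer but forces you to control cubic-and-higher remainder terms via the truncated moment bound $\Exp_\bx[\|\Delta\|^j\1{\|\Delta\|\le q\|\bx\|}]=O(\|\bx\|^{(j-p)_+})$; this gives the sharper $\delta=\min\{p-2,1\}$. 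For $\mu_2$ the paper avoids squaring the expansion altogether by using the algebraic identity $(\|\bx+\Delta\|-\|\bx\|)^2=2\|\bx\|\Delta_\bx+\|\Delta\|^2-2\|\bx\|(\|\bx+\Delta\|-\|\bx\|)$, which reduces $\mu_2$ directly to the already-established estimate for $\mu_1$. Two cosmetic points: the remainder in $\sqrt{1+u}$ needs $u$ bounded away from $-1$ (not $1$), and $2q+q^2<1$ actually requires $q<\sqrt{2}-1$, though any fixed $q<1/2$ keeps $u$ in a compact subset of $(-1,\infty)$, which is all you need.
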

  \begin{proof}
By time-homogeneity, it suffices to consider the case $n=0$. By the triangle inequality,
$| R_{1} - R_0 | = \bigl| \| X_0 + \Delta \| - \| X_0 \| \bigr| \leq \| \Delta \|$, so that \eqref{bound} follows from \eqref{ass:moments}.

We prove \eqref{mu1} and \eqref{mu2} by approximating
\begin{equation}\label{eqn:|x+Delta|-|x|}
\begin{split}
\| \bx + \Delta \| - \| \bx \| &= \sqrt{ \bra \bx+\Delta , \bx + \Delta \ket } - \|\bx \|\\
&= \| \bx \| \left[ \left( 1 + \frac{2 \Delta_\bx}{\|\bx\|} + \frac{\|\Delta\|^2}{\|\bx\|^2} \right)^{1/2} - 1 \right]
\end{split}
\end{equation}
for large $\bx$.  
Let $A_\bx = \{ \| \Delta \| \leq \| \bx \|^\beta \}$ for some $\beta \in (0,1)$ to be determined later.  
On the event $A_\bx$ we approximate \eqref{eqn:|x+Delta|-|x|} using Taylor's formula
 for $(1+y)^{1/2}$, and on the event $A_\bx^{\rc}$ we bound \eqref{eqn:|x+Delta|-|x|} using \eqref{ass:moments}.

Indeed, for all $y > -1$, Taylor's theorem with Lagrange remainder shows that
\begin{equation*}
(1+y)^{1/2} = 1 + \frac{1}{2}y - \frac{1}{8}y^2(1+\gamma y)^{-3/2},
\end{equation*}
for some $\gamma = \gamma(y) \in [0,1]$, so on the event $A_\bx$,
\begin{align}\label{eqn:on-Ax}
\| \bx + \Delta \| - \| \bx \| &= \| \bx \| \left( \frac{\Delta_\bx}{\|\bx \|} + \frac{\| \Delta \|^2}{2\| \bx \|^2} - \frac{1}{8}\left( \frac{2 \Delta_\bx}{\|\bx\|} + \frac{\|\Delta\|^2}{\|\bx\|^2} \right)^2 \left( 1+ O(\| \bx \|^{\beta-1} ) \right) \right) \nonumber\\
&= \Delta_\bx + \frac{\| \Delta \|^2}{2\| \bx \|} - \frac{\| \bx \|}{8}\left( \frac{4 \Delta_\bx^2}{\|\bx\|^2} + \frac{\|\Delta\|^2}{\|\bx\|^2}\left( \frac{4\Delta_\bx}{\|\bx\|} + \frac{\|\Delta\|^2}{\|\bx\|^2}\right) \right) \left( 1+ O(\| \bx \|^{\beta-1} ) \right)\nonumber\\
&= \Delta_\bx + \frac{\| \Delta \|^2}{2\| \bx \|} - \frac{\Delta_\bx^2}{2\| \bx \|} \left( 1+ O(\| \bx \|^{\beta-1} ) \right) + \frac{\|\Delta\|^2}{\|\bx\|} O(\| \bx \|^{\beta-1} )\nonumber\\
&= \Delta_\bx + \left( \frac{\| \Delta \|^2 - \Delta_\bx^2 }{2\| \bx \|} \right) \left( 1+ O(\| \bx \|^{\beta-1} ) \right),
\end{align}
where the error terms follow from the fact that $| \Delta_\bx | \leq \|\Delta\| \leq \| \bx \|^\beta$ for $\beta < 1$.

On the other hand,
\begin{equation}\label{eqn:on-AxC}
\bigl| \| \bx + \Delta \| - \|\bx \| \bigr| \2 { A_\bx^{\rc} } \leq \| \Delta \| \2 { A_\bx^{\rc} } = \| \Delta\|^p \|\Delta\|^{1-p} \2 { A_\bx^{\rc} } \leq \| \Delta \|^p \| \bx \|^{\beta(1-p)},
\end{equation}
by the triangle inequality and the fact that $\| \Delta \| > \|\bx\|^\beta$ on $A_\bx^{\rc}$.
Since
\[
\| \bx + \Delta \| - \|\bx \| = ( \| \bx + \Delta \| - \|\bx \| ) \2 { A_\bx } + ( \| \bx + \Delta \| - \|\bx \| ) \2 { A_\bx^{\rc} },
\]
we can combine \eqref{eqn:on-Ax} and \eqref{eqn:on-AxC} to give
 \begin{align*}
\biggl| \| \bx + \Delta \| \biggr. &- \biggl.\|\bx \| - \left[\Delta_\bx + \left( \frac{\| \Delta \|^2 - \Delta_\bx^2 }{2\| \bx \|} \right) \left( 1+ O(\| \bx \|^{\beta-1} ) \right)  \right] \biggr|\\
& =\biggl| \| \bx + \Delta \| - \|\bx \| - \left[\Delta_\bx + \left( \frac{\| \Delta \|^2 - \Delta_\bx^2 }{2\| \bx \|} \right) \left( 1+ O(\| \bx \|^{\beta-1} ) \right)  \right] \biggr| \2 {A_\bx^{\rc} }\\
& \leq \| \Delta \|^p \| \bx \|^{\beta(1-p)} + \left| \Delta_\bx + \left( \frac{\| \Delta \|^2 - \Delta_\bx^2 }{2\| \bx \|} \right) \left( 1+ O(\| \bx \|^{\beta-1} ) \right) \right| \2 { A_\bx^{\rc} }\\
& \leq 2 \| \Delta \|^p \| \bx \|^{\beta(1-p)} + \frac{\| \Delta \|^p}{2\| \bx \|} \left( 1+ O(\| \bx \|^{\beta-1} ) \right) \|\bx\|^{\beta(2-p)} .
\end{align*}
Therefore, taking expectations and using \eqref{ass:moments}, we obtain
\[
\mu_1(\bx)
= \Exp_\bx[\Delta_\bx] + \frac{ \Exp_\bx[\| \Delta \|^2 - \Delta_\bx^2 ]}{2\| \bx \|}
+ O( \| \bx \|^{\beta-2} ) + O( \| \bx \|^{\beta(1-p)} ) + O( \| \bx \|^{\beta(2-p) - 1} ).
\]
Taking $\beta = 2/p$ makes all the error terms of size $O( \| \bx \|^{-1-\delta})$ for some $\delta = \delta(p) > 0$, namely for $\delta = (p-2)/p$.

For the second moment, we use the identity
\[
\begin{split}
(\|\bx+\Delta\| - \|\bx\|)^2 &= \|\bx +\Delta\|^2 - \|\bx \|^2 - 2\|\bx\|(\|\bx+\Delta\|-\|\bx\|)\\
&= 2\|\bx\|\Delta_\bx + \|\Delta\|^2 - 2\|\bx\|(\|\bx+\Delta\|-\|\bx\|),
\end{split}
\]
so that
\[
\mu_2(\bx)  = 2\|\bx\|\Exp_\bx[\Delta_\bx] + \Exp_\bx[\|\Delta\|^2] - 2\|\bx\|\mu_1(\bx) 
 = \Exp_\bx[\Delta_\bx^2] + O(\|\bx\|^{-\delta}),
\]
as required.
\end{proof}

With the additional assumptions \eqref{ass:zero_drift}, \eqref{ass:cov_limit}, and \eqref{ass:cov_form}, we can use Lemma~\ref{lemma1} to prove the following result.

\begin{lemma}\label{lemma2}
Suppose that $X$ is a discrete-time, time-homogeneous Markov process on $\X \subseteq \R^d$ satisfying \eqref{ass:moments},
\eqref{ass:zero_drift}, \eqref{ass:cov_limit}, and \eqref{ass:cov_form}. Then, with $\mu_1, \mu_2$ defined at \eqref{mu1}, \eqref{mu2}, and $\eps(r)$ defined at \eqref{ass:cov_limit},
there exists $\delta >0$ such that, as $\|\bx\| \to \infty$,
\begin{equation}\label{eqn:mu_k-r}
2\|\bx\| \mu_1(\bx) = V-U + O(\eps(\|\bx\|)) + O(\|\bx\|^{-\delta}), ~~ 
\mu_2(\bx) = U + O(\eps(\|\bx\|)) + O(\|\bx\|^{-\delta}).
\end{equation}
\end{lemma}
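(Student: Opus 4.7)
The plan is to substitute the structure imposed by \eqref{ass:zero_drift}, \eqref{ass:cov_limit}, and \eqref{ass:cov_form} into the expressions for $\mu_1(\bx)$ and $\mu_2(\bx)$ provided by Lemma~\ref{lemma1}; since \eqref{ass:moments} is assumed throughout, that lemma is directly applicable, and the whole argument is essentially a matter of bookkeeping.

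First I would observe that the zero-drift assumption \eqref{ass:zero_drift} gives $\Exp_\bx[\Delta_\bx] = \bra \hat\bx, \Exp_\bx[\Delta]\ket = 0$, so the leading term in \eqref{mu1} drops out. Then I would rewrite the second-order radial moments in terms of the covariance matrix $M(\bx)$: namely $\Exp_\bx[\Delta_\bx^2] = \hat\bx^\tra M(\bx) \hat\bx$ and $\Exp_\bx[\|\Delta\|^2] = \trace(M(\bx))$.

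The key step is the replacement of $M(\bx)$ by $\sigma^2(\hat\bx)$ at a cost controlled by \eqref{ass:cov_limit}. For the quadratic form, the operator norm and the fact that $\hat\bx \in \Sp{d-1}$ yield
\[
\bigl| \hat\bx^\tra ( M(\bx) - \sigma^2(\hat\bx) ) \hat\bx \bigr| \leq \| M(\bx) - \sigma^2(\hat\bx) \|_{\rm op} \leq \eps(\|\bx\|),
\]
and combining with \eqref{ass:cov_form} gives $\Exp_\bx[\Delta_\bx^2] = \bra \hat\bx, \hat\bx\ket_{\hat\bx} + O(\eps(\|\bx\|)) = U + O(\eps(\|\bx\|))$. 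For the trace, the general estimate $|\trace(A)| \leq d\,\|A\|_{\rm op}$ (applied to $A = M(\bx) - \sigma^2(\hat\bx)$) together with \eqref{ass:cov_form} yields $\trace(M(\bx)) = V + O(\eps(\|\bx\|))$, and hence $\Exp_\bx[\|\Delta\|^2 - \Delta_\bx^2] = V - U + O(\eps(\|\bx\|))$.

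Substituting these into \eqref{mu1} and \eqref{mu2} produces exactly the asymptotics \eqref{eqn:mu_k-r}, with the $O(\|\bx\|^{-\delta})$ term inherited directly from Lemma~\ref{lemma1}. There is no real obstacle here; the only minor technicality is checking that the trace error costs merely a harmless dimensional constant $d$ that is absorbed by the $O(\cdot)$ notation, and that the quadratic-form error sees no constant at all because $\hat\bx$ is a unit vector. Everything else is algebra.
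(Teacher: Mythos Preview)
Your proposal is correct and follows essentially the same route as the paper's proof: both compute $\Exp_\bx[\Delta_\bx]=0$ from \eqref{ass:zero_drift}, express $\Exp_\bx[\Delta_\bx^2]$ and $\Exp_\bx[\|\Delta\|^2]$ as $\hat\bx^\tra M(\bx)\hat\bx$ and $\trace(M(\bx))$, replace $M(\bx)$ by $\sigma^2(\hat\bx)$ at cost $O(\eps(\|\bx\|))$ via \eqref{ass:cov_limit}, read off $U$ and $V$ from \eqref{ass:cov_form}, and substitute into Lemma~\ref{lemma1}. Your treatment is in fact slightly more explicit about the operator-norm-to-trace and operator-norm-to-quadratic-form bounds than the paper's, but the argument is the same.
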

\begin{proof}
By definition of $\eps(r)$ at \eqref{ass:cov_limit} we have $\| M(\bx) - \sigma^2(\hat \bx) \|_{\rm op} = O(\eps(\|\bx\|))$ as $\|\bx\| \to \infty$.
  Then \eqref{ass:cov_form} implies that
\[
\begin{split}
\Exp_{\bx}[ \|\Delta\|^2 ] &= \trace{(M(\bx))}\\
&= \trace{(\sigma^2(\hat\bx))} + O(\eps(\|\bx\|))\\
&= V + O(\eps(\|\bx\|)),
\end{split}
\]
and
\[
\begin{split}
\Exp_{\bx}[ \Delta_\bx^2 ] &= \bra \hat\bx, M(\bx) \cdot \hat\bx\ket\\
&= \bra \hat\bx, \sigma^2(\hat\bx) \cdot \hat\bx \ket + O(\eps(\|\bx\|))\\
&= U + O(\eps(\|\bx\|)),
\end{split}
\]
and \eqref{ass:zero_drift} implies that $\Exp_{\bx} [\Delta_\bx] = \Exp_{\bx} [ \bra \Delta , \hat\bx \ket ]  = \bra \mu(\bx) , \hat\bx \ket = 0$.  
Using these expressions in Lemma~\ref{lemma1} yields \eqref{eqn:mu_k-r}.
\end{proof}

Now we can complete the proof of Theorem~\ref{thm:recurrence}.

\begin{proof}[Proof of Theorem~\ref{thm:recurrence}.]
We apply Lamperti's  \cite{lamp1} recurrence classification to $R_n = \|X_n\|$, the radial
process.  Proposition~\ref{lem:lim_sup_infty} shows
 that $\limsup_{n\to\infty}R_n = +\infty$, and Lemma~\ref{lemma1} tells us that \eqref{bound} is satisfied.

Because the error terms in \eqref{eqn:mu_k-r} are uniform in $\bx$, 
Lemma~\ref{lemma2} shows that for all $\eta > 0$ there exists $C < \infty$ such that
\[
2\|\bx\|\mu_1(\bx) - \mu_2(\bx) \in [ V-2U - \eta, V-2U + \eta ]
\]
for all $\bx \in \X$ with $\|\bx\|\geq C$.  Therefore, it follows from Theorem~3.2 of \cite{lamp1} that
$X$ is transient if $V-2U > 0$ and recurrent if $V-2U < 0$.  For the boundary case, when $V-2U=0$, if $\eps(r) = O(r^{-\delta_0})$ then
\[
2\|\bx\|\mu_1(\bx) - \mu_2(\bx) = O(\|\bx\|^{-\delta_1}),
\]
for $\delta_1 = \min\{\delta,\delta_0\}$, which implies that $X$ is recurrent, again by Theorem~3.2 of \cite{lamp1}.
\end{proof}

\section{Nullity}\label{sec:nullity}

In this section we give the proof of Theorem~\ref{thm:null}.
In the transient case, this is straightforward.

\begin{lemma}
\label{lem:null-trans}
In case (i) of Theorem~\ref{thm:recurrence}, for any bounded $A \subset \R^d$, as $n \to \infty$,
the null property \eqref{eq:null} holds.
\end{lemma}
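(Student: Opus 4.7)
The plan is to exploit the fact that in case (i) of Theorem~\ref{thm:recurrence} we have outright transience, i.e., $\|X_n\| \to \infty$ a.s., which makes the statement almost immediate.

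First I would fix a bounded set $A \subset \R^d$ and choose $R \in \RP$ such that $A \subseteq \{\bx \in \R^d : \|\bx\| \leq R\}$. Since case (i) gives $\lim_{n\to\infty}\|X_n\| = +\infty$ a.s.\ by the definition of transience recalled in Section~\ref{sec:rws}, there exists an a.s.\ finite random variable $N_0 = N_0(\omega)$ with $\|X_n\| > R$ for all $n \geq N_0$. Consequently the total occupation time
\[
T_A := \sum_{k=0}^{\infty} \1{X_k \in A} \leq N_0 < \infty, \as
\]
This gives $\frac{1}{n}\sum_{k=0}^{n-1} \1{X_k \in A} \leq T_A / n \to 0$ a.s., establishing the almost sure half of~\eqref{eq:null}.

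For the $L^q$ convergence with $q \geq 1$, observe that the averages
$S_n := \frac{1}{n}\sum_{k=0}^{n-1} \1{X_k \in A}$
satisfy $0 \leq S_n \leq 1$ deterministically. Since we have just shown $S_n \to 0$ a.s., the dominated convergence theorem (with constant dominating function $1$) yields $\Exp[S_n^q] \to 0$ for every $q \geq 1$, which is the required $L^q$ convergence.

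There is no real obstacle: unlike the recurrent cases (ii) and (iii), where nullity must be extracted from the detailed behaviour of the radial Lamperti process, the transient case reduces to the trivial observation that a bounded set is visited only finitely often once the walk escapes to infinity. The more substantive work will be in establishing the analogous statement in the recurrent regimes, which presumably occupies the rest of Section~\ref{sec:nullity}.
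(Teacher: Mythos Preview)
Your proof is correct and essentially identical to the paper's: both observe that transience forces $\1{X_k\in A}\to 0$ a.s.\ (you phrase this via finite total occupation time, the paper directly via the indicators), deduce the a.s.\ Ces\`aro limit, and then invoke bounded/dominated convergence for the $L^q$ part. Your closing remark that the real work lies in the recurrent cases is also exactly right.
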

\begin{proof}
It is sufficient to prove \eqref{eq:null} in the case where $A = B_r := \{ \bx \in \X : \| \bx \| \leq r \}$.
In case (i), $X$ is transient, meaning that $\| X_n \| \to \infty$ a.s., so that $\1 {   X_n  \in B_r } \to 0$, a.s.,
for any $r \in \RP$. Hence the Ces\`aro limit in \eqref{eq:null} is also $0$, a.s., and the  $L^q$
convergence follows from the bounded convergence theorem.
\end{proof}

It remains to consider cases (ii) and (iii), when $X$ is recurrent. Thus there exists $r_0 \in \RP$
such that $\liminf_{n \to \infty} \| X_n\| \leq r_0$, a.s.
Let $\tau_r := \min \{ n \in \ZP : X_n \in B_r \}$.
It suffices to take $A = B_r$, $r > r_0$, so $X_n \in B_r$ infinitely often.
We make the following claim, whose proof is deferred until the end of this section, which says that
if the walk has not yet entered a ball of radius $R$ (for any $R >r$ big enough), the time until it reaches
the ball of radius $r$ has tail bounded below as displayed. 

\begin{lemma}
\label{lem:null-estimate}
In cases (ii) and (iii) of Theorem~\ref{thm:recurrence}, there exists a finite $r_1 \geq
r_0$ such that for any $r>r_1$ and $R>r$ there exists a finite positive $c$ such that
\begin{equation}
\label{eq:tail_lower_bound}
 \Pr [ \tau_r  \geq n + m  \mid X_0, \ldots, X_n ] \geq c m^{-1/2} , \text{ on } \{ n < \tau_{R} \} ,
\end{equation}
for all sufficiently large $m$.
\end{lemma}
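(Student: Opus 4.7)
Plan: By time-homogeneity and the Markov property, the claim reduces to showing $\Pr_\bx[\tau_r \geq m] \geq c m^{-1/2}$ for all starting points $\bx \in \X$ with $\|\bx\| \geq R$ (and $r > r_1$, $R > r$ chosen suitably large, $m$ sufficiently large, and $c = c(r, R) > 0$). I work throughout with the radial process $R_n = \|X_n\|$: by Lemmas~\ref{lemma1} and~\ref{lemma2}, writing $d := V/U$, the radial drift and second moment obey $\mu_1(\bx) = (d-1)U/(2\|\bx\|) + o(\|\bx\|^{-1})$ and $\mu_2(\bx) = U + o(1)$ as $\|\bx\| \to \infty$. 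The standing assumption $U < V$ yields $d > 1$; in cases (ii) and (iii) we additionally have $d \leq 2$.

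The plan rests on two martingale inputs. \emph{First}, the process $R_n^2$ has positive asymptotic drift $V$, since $\Exp[R_{n+1}^2 - R_n^2 \mid \cF_n] = \mu_2(X_n) + 2R_n\mu_1(X_n) = V + o(1)$; optional stopping applied to $R_n^2 - nV$ at the exit time $\sigma$ of a strip $\{r < \|\bx\| < R'\}$ bounds $\Exp_\bx[\sigma] \leq C (R')^2 / V$. \emph{Second}, the function $\phi(r) := r^{2-d}$ in case (ii), or $\phi(r) := \log r$ in case (iii), is ``Bessel-harmonic'' for the limiting drift--diffusion: a Taylor expansion of $\phi(R_{n+1}) - \phi(R_n)$, combined with the above asymptotics, produces the cancellation $\Exp[\phi(R_{n+1}) - \phi(R_n) \mid \cF_n] = o(R_n^{-d})$. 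Here the hypothesis $\eps(r) = O(r^{-\delta_0})$ in case (iii) ensures that these correction terms are summable along the trajectory. Optional stopping for $\phi(R_n)$ at the same $\sigma$ then gives the Bessel-type lower bound
\[ \Pr_\bx[\tau_{R'} < \tau_r] \geq \frac{\phi(\|\bx\|) - \phi(r)}{\phi(R') - \phi(r)} (1 - o(1)) , \qquad \|\bx\| \in (r, R') . \]

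Equipped with these two estimates, I would iterate via the strong Markov property along a geometric ladder of scales $R_k := \gamma^k R$ for a fixed $\gamma > 1$: from each $R_k$-shell the walk reaches the $R_{k+1}$-shell before entering $B_r$ with probability at least some $p_0 > 0$ uniform in $k$ (the ratio $(\phi(R_k) - \phi(r))/(\phi(R_{k+1}) - \phi(r))$ stabilises to a positive constant depending only on $\gamma$ as $k \to \infty$), and the excursion has expected length a constant multiple of $R_{k+1}^2$. After $k$ successive successful excursions, of joint probability at least $p_0^k$, the walk has avoided $B_r$ for time of order $R_k^2 = \gamma^{2k} R^2$. Choosing $k := \lfloor \tfrac{1}{2}\log_\gamma(m/R^2) \rfloor$ so that this time is comparable to $m$ yields $\Pr_\bx[\tau_r \geq m] \geq p_0^k$, which in case (ii) evaluates to $C(r,R)\, m^{-(2-d)/2}$ and in case (iii) to $C(r,R)/\log m$. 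Since $d > 1$ implies $(2-d)/2 < 1/2$, both expressions dominate $c m^{-1/2}$ for all large $m$.

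The main obstacle will be the careful bookkeeping of error terms in both optional stopping applications and in the iteration, ensuring that the exit-probability lower bound $p_0$ and the expected exit-time bound hold uniformly across all scales $R_k$. The summability of the $o(R_n^{-d})$ corrections in the martingale identity for $\phi(R_n)$ depends crucially on $d > 1$, which is precisely where the standing hypothesis $U < V$ enters the argument. Controlling the overshoot $R_\sigma - R'$ at the top of each strip (needed to make the optional stopping identity for $R_n^2$ clean) will be handled via the $L^p$-moment bound from assumption~\eqref{ass:moments} with $p > 2$.
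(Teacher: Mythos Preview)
Your approach via Bessel-type scale functions is more refined than the paper's and, if completed, would yield the sharper tail exponent $(2-d)/2 < 1/2$ (or $1/\log m$ in case (iii)). However, there is a genuine gap in the time estimate. Your ``first martingale input'' --- optional stopping for $R_n^2 - nV$ --- gives only an \emph{upper} bound $\Exp_\bx[\sigma] \leq C(R')^2$ on the expected exit time from the strip, yet you then assert that after $k$ successful up-crossings to level $R_k$ the walk has avoided $B_r$ ``for time of order $R_k^2$''. This does not follow: the event $\{\sigma_{R_k} < \tau_r\}$ says nothing about the size of $\sigma_{R_k}$, and with increments that are not bounded the walk could in principle climb to level $R_k$ in very few steps. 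You have established that the walk escapes far with the right probability, but not that it does so slowly; some separate lower bound on the time is needed, and nothing in your two inputs provides it.

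The paper's proof is structurally simpler and sidesteps this issue. It uses only the submartingale property of $R_n$ (from $V > U$): optional stopping of a truncated version of $R_{n\wedge\tau_r\wedge\sigma_x}$ gives $\Pr_\bx[\sigma_x < \tau_r] \geq (R-r)/(2x)$ directly, with no scale function or geometric ladder. The time lower bound is then supplied by a maximal inequality applied to $(R_{\sigma_x+k} - R_{\sigma_x})^2$, showing that from level $x$ the walk needs time at least $\eps x^2$ to descend to $B_{x/2}$ with probability at least $1/2$. The two steps combine via the strong Markov property at $\sigma_x$ to give $\Pr_\bx[\tau_r > \eps x^2] \geq (R-r)/(4x)$. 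Your argument can be repaired by inserting exactly this maximal-inequality step once the walk has reached level $R_k$; with that addition, your iteration does deliver the sharper exponent you claim, at the cost of the extra machinery of the scale function $\phi$ and the associated error control (which the paper's cruder exponent $-1/2$ does not require).
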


Assuming this result, we can complete the proof of Theorem~\ref{thm:null}.

\begin{proof}[Proof of Theorem~\ref{thm:null}.]
In case (i), the result is contained in Lemma~\ref{lem:null-trans}. So consider cases (ii) and (iii).
Fix $r$ and $R$ with $R > r > r_1$, with $r_1$ as in Lemma~\ref{lem:null-estimate}.  Note
that $\liminf_{n \to \infty} \| X_n\| \leq r_0 \leq r_1$, a.s.

Set $\gamma_1 := 0$ and then define recursively, for $\ell \in \N$, the stopping times
\[ \eta_\ell := \min \{ n \geq \gamma_\ell : X_{n} \notin B_{R} \} , ~~~ \gamma_{\ell+1} := \min \{ n \geq \eta_\ell : X_{n} \in B_{r}\} , \]
with the convention that $\min \emptyset := \infty$. Since $r > r_0$ and $\limsup_{n \to \infty } \| X_n \| = \infty$ (by Proposition~\ref{lem:lim_sup_infty}),
for all $\ell \in \N$ we have $\eta_\ell <\infty$ and $\gamma_\ell < \infty$, a.s., and
\[ 0 = \gamma_1 < \eta_1 < \gamma_2 < \eta_2 < \cdots  .\]
In particular,  $\lim_{\ell \to \infty} \gamma_\ell =
\lim_{\ell \to \infty} \eta_\ell = \infty$, a.s.
 
We now write $\cF_n := \sigma (X_0, \ldots, X_n)$.
We use Lemma~\ref{lem:d-dim-KOI} to show that the process must exit from $B_R$ rapidly enough.
 Indeed, 
if $\kappa$ is any finite stopping time,
set $Y_n = X_{\kappa+n} - X_{\kappa}$ and $\cG_n = \cF_{\kappa + n}$.
Then the assumptions
\eqref{ass:moments}, \eqref{ass:zero_drift} and \eqref{ass:unif_ellip} show that
 the hypotheses of  Lemma~\ref{lem:d-dim-KOI} are satisfied, since, for example,
\[ \Exp [ \| Y_{n+1} -Y_n \|^p \mid \cG_n ] = \Exp [ \| X_{\kappa+n+1} -X_{\kappa+n} \|^p \mid \cF_{\kappa+n} ] = \Exp_{X_{\kappa +n}} [ \| \Delta \|^p ] ,\]
by the strong Markov property for $X$ at the finite stopping time ${\kappa + n}$.  In particular,
another application of Lemma~\ref{lem:d-dim-KOI}, similarly to \eqref{eq:escape-chance}, shows that
we may choose $n = n(R) \in \N$ sufficiently large so that
\begin{equation}
\label{eq:big_ball_escape}
\Pr \Bigl[ \max_{0 \leq \ell \leq n(R)} \| X_{\kappa + \ell} - X_\kappa \| \geq 2 R \Bigmid \cF_{\kappa} \Bigr] \geq \frac{1}{2} , \as, \end{equation}
an event whose occurrence ensures that if $X_\kappa \in B_R$, then $X$ exits $B_R$
before time $\kappa+n(R)$.
Fix $k \in \N$. 
Then, an application of \eqref{eq:big_ball_escape} at stopping time $\kappa = \gamma_k$ shows that
\[ \Pr [ \eta_k - \gamma_k >   n (R) \mid \cF_{\gamma_k} ] 
\leq \Pr \Bigl[ \max_{0 \leq \ell \leq n(R)} \| X_{{\gamma_k} +   \ell} - X_{\gamma_k} \| < 2 R \Bigmid \cF_{\gamma_k} \Bigr] \leq \frac{1}{2} , \as\]
Similarly,
\begin{align*} \Pr [ \eta_k - \gamma_k >  2 n (R) \mid \cF_{\gamma_k} ] & = \Exp \bigl[ \1 { \eta_k - \gamma_k > n(R) } \Exp [ \1 { \eta_k - \gamma_k > 2 n(R) } \mid \cF_{\gamma_k + n(R)} ]
\bigmid \cF_{\gamma_k} \bigr] \\
& \leq \frac{1}{2}  \Pr [ \eta_k - \gamma_k >   n (R) \mid \cF_{\gamma_k} ] \leq \frac{1}{4} ,\end{align*}
this time applying \eqref{eq:big_ball_escape} at stopping time $\kappa = \gamma_k + n(R)$ as well.
Iterating this argument, it follows that $\Pr [ \eta_k - \gamma_k > m \cdot n(R) \mid \cF_{\gamma_k} ] \leq 2^{-m}$, a.s.,
 for all $m \in \N$. From here,
it is straightforward to deduce that, for some constant $C < \infty$,
for any $k \in \N$,
\begin{equation}
\label{eq:escape_upper_bound}  \Exp [  \eta_k - \gamma_k \mid \cF_{\gamma_k} ] \leq C , \as  \end{equation}
On the other hand, the tail estimate \eqref{eq:tail_lower_bound} implies that
\begin{equation}
\label{eq:return_lower_bound} \Pr [ \gamma_{k+1} -\eta_k \geq m \mid  \cF_{\eta_k} ] \geq c m^{-1/2} , \as,
 \end{equation}
for $c >0$ and all sufficiently large $m$.

For any $n \in \N$, set $k(n) := \min\{ k \geq 2 : \gamma_k > n \}$, so that
 $\gamma_{k(n)-1} \leq n < \gamma_{k(n)}$ for $k(n) \in \{2,3,\ldots\}$.
 Note $k(n) < \infty$ and $\lim_{n \to \infty} k(n) = \infty$, a.s.
Then we claim
\begin{align}
\label{eq:ratio}
 \frac{1}{n} \sum_{k=0}^{n-1} \1 { X_k \in B_{r} } 
\leq 
\frac{\sum_{k =1}^{k(n)-1} \left( \eta_{k} - \gamma_k \right)}{\sum_{k =1}^{k(n)-2} \left( \gamma_{k+1} - \eta_k \right)} .\end{align}
This is easiest to see by considering two separate cases. First, if $\eta_{k(n) -1} < n < \gamma_{k(n)}$, 
\[  \frac{1}{n} \sum_{k=0}^{n-1} \1 { X_k \in B_{r} }  \leq 
\frac{1}{\eta_{k(n)-1}}  \sum_{k=0}^{\eta_{k(n)-1}} \1 { X_k \in B_{r} }, \]
which implies \eqref{eq:ratio},
since the set of $k$ less than $n$ for which $X_k \in B_{r}$
is contained in the set $\cup_{k=1}^{k(n)-1} [\gamma_k , \eta_{k})$. On the other hand,
if $\gamma_{k(n)-1} \leq n \leq \eta_{k(n) -1}$, using the elementary inequality
$\frac{a}{b} \leq \frac{a+c}{b+c}$ for non-negative $a,b,c$ with $a/b \leq 1$, we have
\begin{align*}
  \frac{1}{n} \sum_{k=0}^{n-1} \1 { X_k \in B_{r} }  \leq 
	\frac{1}{\eta_{k(n) -1}} \left(\sum_{k=0}^{n-1} \1 { X_k \in B_{r} } + (\eta_{k(n) -1} -n) \right)	,
	\end{align*}
	which again gives \eqref{eq:ratio}.
	
To estimate the growth rates of the numerator and denominator of the right-hand side
of \eqref{eq:ratio}, we apply some results from  \cite{hmmw}.
First, writing $Z_m = \sum_{k=1}^{m-1} (\eta_k - \gamma_k)$
and $\cG_m = \cF_{\gamma_m}$, by \eqref{eq:escape_upper_bound}
we can apply Theorem 2.4 of \cite{hmmw}
to the $\cG_m$-adapted process $Z_m$ to obtain that for any $\eps>0$,
a.s., for all but finitely many $m$,
\[ \sum_{k=1}^{m   -1} \left( \eta_{k} - \gamma_k \right)  \leq m^{1+\eps} . \]
On the other hand, 
writing $Z_m = \sum_{k=1}^{m-1} (\gamma_{k+1} - \eta_k)$
and $\cG_m = \cF_{\eta_m}$, 
by \eqref{eq:return_lower_bound}
we can apply Theorem 2.6 of \cite{hmmw}
to the $\cG_m$-adapted process $Z_m$ to obtain that for any $\eps>0$,
for all $m$ sufficiently large,
\[ \sum_{k =1}^{m   -1} \left( \gamma_{k+1} - \eta_k \right) \geq m^{2-\eps} .\]
 Now \eqref{eq:ratio} gives the almost-sure version of the result \eqref{eq:null}. The $L^q$ version follows from the bounded convergence theorem.
\end{proof}

It remains to complete the proof of Lemma~\ref{lem:null-estimate}.  A more general,
two-sided version of the inequality in Lemma~\ref{lem:null-estimate} is proved
in~\cite[Theorem 2.4]{hmw1} but under slightly different assumptions.  Because of
this, we cannot apply that result directly; nevertheless, the proof techniques naturally
transfer to our setting.  In doing so, the arguments become simpler to apply, so we
reproduce them here.

\begin{proof}[Proof of Lemma~\ref{lem:null-estimate}]
By the Markov property for $X$ it is enough to prove the statement for $n=0$, namely that
there exists finite $r_1 \geq r_0$ such that for any $r > r_1$ and $R>r$ there exists a finite positive constant $c$ such that, if $X_0
\not\in B_R$ then
\[
\Pr[ \tau_r > m \mid X_0 ] \geq c m^{-1/2},
\]
for sufficiently large $m$.

We outline the two intuitive steps in the proof.
First we show that the probability that $\max_{0 \leq k \leq
  \tau_r} \|X_k\|$ exceeds some large $x$ is bounded below by
a constant times $1/x$. Second, we show that if the latter event does occur,
with  probability at least $1/2$ it takes the process time at least a constant
times $x^2$ to reach $B_r$. Combining these two estimates will show that
with probability of order $1/x$ the walk takes time of order $x^2$ to reach $B_r$, which gives the desired tail bound.
Roughly speaking, the first estimate (reaching distance $x$) is provided by the optional stopping
theorem and the fact that $\| X_k \|$ is a submartingale (cf.~\cite[Theorem 2.3]{hmw1}), and the second (taking quadratic time to return)
is provided
by a maximal inequality applied to an appropriate quadratic displacement functional (cf.~\cite[Lemma 4.11]{hmw1}). A technicality required for the first estimate
is that to apply optional stopping, we need uniform integrability; so we actually work with a truncated version of $\| X_k\|$.

We now give the details.
Recall that $R_k = \|X_k\|$ and let $\cF_k = \sigma(X_0,\dots,X_k)$.  Lemmas~\ref{lemma1}
and~\ref{lemma2}, with the fact that $V >U$ by \eqref{ass:cov_form}, imply that
\begin{equation}\label{eqn:diff-R-bound}
\Exp[ R_{k+1} - R_k \mid \cF_k ] \geq \frac{2\eps}{R_k} + o(R_k^{-1}) \geq \frac{\eps}{R_k},
\end{equation}
for all $R_k >  r_1$, for sufficiently large $r_1 \geq r_0$ and some positive constant $\eps$. 
Now, suppose that $r$ and $R$ satisfy $R> r>r_1$ and fix $x$ with $x \gg R$.  Set $R_k^x := \min\{2x,R_k\}$ and $\sigma_x := \min\{k \geq 0 : R_k > x
\}$.  Since $X_k$ is a martingale, we have that $R_k$ is a submartingale, as is the stopped
process $Y_k := R_{k \wedge \tau_r \wedge \sigma_x}$.  In order to achieve uniform
integrability, we consider the truncated process $Y_k^x := R_{k \wedge \tau_r
  \wedge \sigma_x}^x$ and show that this is a submartingale.

For $k \geq \tau_r \wedge \sigma_x$, we have $Y_{k+1}^x - Y_k^x = 0$ so $\Exp[Y_{k+1}^x
-Y_k^x \mid \cF_k ] = 0$. For $k < \tau_r \wedge \sigma_x$,
\[
Y_{k+1}^x - Y_k^x = R_{k+1} - R_k + (2x - R_{k+1}) \1{R_{k+1} > 2x},
\]
and the last term can be bounded in absolute value:
\[\begin{split}
| (2x - R_{k+1}) \1{R_{k+1} > 2x } | &\leq | R_{k+1}-R_k| \1{R_{k+1} > 2x } \\
&\leq  |R_{k+1}-R_k| \1{|R_{k+1}-R_k| > x }\\
&\leq |R_{k+1}-R_k|^p x^{1-p},
\end{split}
\]
for $p>2$ as appearing in \eqref{ass:moments}, since on $\{ k < \sigma_x\}$ we have $R_k <
  x$ and therefore $R_{k+1}> 2x$ implies that $|R_{k+1} - R_k| > x$.  Applying~\eqref{bound} from Lemma~\ref{lemma1} we obtain
\[
\Exp[ |(Y_{k+1}^x - Y_k^x) - (R_{k+1} - R_k) | \mid \cF_k ] \leq Bx^{1-p}, 
\]
for some $B<\infty$ not depending on $x$.
Combining this with \eqref{eqn:diff-R-bound} and again the fact that $R_k < x$ on $\{ k <
\sigma_x\}$, we have that
\[
\Exp[ Y_{k+1}^x - Y_k^x \mid \cF_k ] \geq \frac{\eps}{R_k} - Bx^{1-p} \geq \frac{\eps}{x}
- Bx^{1-p} \geq 0,
\]
for sufficiently large $x$.

Hence, for sufficiently large $x$, $Y_k^x$ is a uniformly integrable submartingale and
therefore, given $X_0 \not\in B_R$, by optional stopping,
\[
\begin{split}
R < R_0 = Y^x_0 \leq \Exp[ Y_{\sigma_x \wedge \tau_r}^x \mid X_0 ] &= \Exp[
Y_{\sigma_x}^x \1{\sigma_x < \tau_r} \mid X_0 ] + \Exp[ Y_{\tau_r}^x\1{\tau_r < \sigma_x}
\mid X_0 ] \\
&\leq 2x \Pr[ \sigma_x < \tau_r \mid X_0 ] + r.
\end{split}
\]
In other words, given $X_0 \not\in B_R$,
\begin{equation}\label{eqn:max-tail}
\Pr\Big[ \max_{0 \leq k \leq \tau_r} R_k > x \Bigmid X_0 \Big] \geq \frac{R-r}{2x},
\end{equation}
for all sufficiently large $x$.

Now, consider $W_k := R_{\sigma_x+k} - R_{\sigma_x}$, adapted to $\cG_k := \cF_{\sigma_x+k}$.  We have
\[
W_{k+1}^2 - W_k^2 = R_{\sigma_x+k+1}^2 - R_{\sigma_x+k}^2 - 2R_{\sigma_x}(R_{\sigma_x+k+1}-R_{\sigma_x+k}).
\]
Using the fact that $R_k$ is a submartingale together with the strong Markov property for $X$ at the
stopping time $\sigma_x+k$ yields $\Exp[ R_{\sigma_x+k+1}-R_{\sigma_x+k} \mid \cF_{\sigma_x+k} ] \geq 0 \as$,
and Lemmas~\ref{lemma1} and~\ref{lemma2} again with the strong Markov property imply that
$\Exp[ R_{\sigma_x+k+1}^2-R_{\sigma_x+k}^2 \mid \cF_{\sigma_x+k} ] \leq C \as,
$ for some constant $C<\infty$; hence $\Exp[W_{k+1}^2- W_k^2 \mid \cG_k ] \leq C \as$, for some constant $C< \infty$.  Then a maximal
inequality~\cite[Lemma~3.1]{mvw} similar to Doob's submartingale inequality implies that,
on $\{\sigma_x < \infty\}$,
\[
\Pr\Big[ \max_{0 \leq k \leq n} W_k^2 \geq y \Bigmid \cG_0 \Big] \leq \frac{Cn}{y}, \text{ for any } y >0.
\]
In particular, we may choose $\eps >0$ small enough so that
\begin{equation}\label{eqn:long-return}
\Pr\Big[ \max_{0 \leq k \leq \eps x^2} |R_{\sigma_x+k} - R_{\sigma_x}| \geq x/2 \Bigmid \cF_{\sigma_x} \Big] \leq
  \frac{1}{2}, \text{ on }   \{\sigma_x < \infty\}.
\end{equation}
Combining the inequalities~\eqref{eqn:max-tail} and~\eqref{eqn:long-return}, we find that
given $X_0 \not\in B_R$,
\[\begin{split}
\Pr\Big[\big\{\max_{0 \leq k \leq \tau_r} R_k > x \big\} &\cap \big\{ \max_{0 \leq k
  \leq \eps x^2} |R_{\sigma_x+ k} - R_{\sigma_x}| < x/2 \big\} \Bigmid X_0 \Big]\\
&= \Exp\Big[ \1{\sigma_x < \tau_r} \Pr\big[ \max_{0 \leq k \leq \eps x^2} |R_{\sigma_x+k} - R_{\sigma_x}| < x/2 \bigmid \cF_{\sigma_x} \big] \Bigmid X_0 \Big]\\
&\geq \frac{1}{2} \Pr\Big[ \max_{0 \leq k \leq \tau_r} R_k > x \Bigmid X_0 \Big] \geq \frac{R-r}{4x},
 \end{split}
\]
for sufficiently large $x$, where the equality here  uses the fact that $\{ \sigma_x < \tau_r \} \in \cF_{\sigma_x}$.

If both of the events $\{ \max_{0 \leq k \leq \tau_r} R_k > x  \}$ and
$ \{ \max_{0 \leq k \leq \eps x^2} |R_{\sigma_x+k} - R_{\sigma_x}| < x/2
 \}$ occur, then the process $X_k$ leaves the ball $B_x$ before time $\tau_r$ and takes more than
$\eps x^2$ steps to return to the ball $B_{x/2} \subset B_r$, and therefore $\tau_r > \eps
x^2$.   Setting $m = \eps x^2$ and $c = (R-r)\sqrt{\eps}/4$ yields the claimed inequality.
\end{proof}

\begin{remark}
It is only in the proof of Lemma~\ref{lem:null-estimate} that we use the condition $U < V$ from \eqref{ass:cov_form}.
In the case $U=V$, inequality \eqref{eqn:diff-R-bound} holds only for (any) $\eps <0$, and not $\eps >0$;
thus to obtain a submartingale one should look at $( Y^x_k )^\gamma$ for $\gamma >1$. The modified argument
yields a weaker version of \eqref{eq:tail_lower_bound}, with $m^{-1/2}$ replaced by $m^{-(1/2)-\delta}$ for any $\delta >0$,
but, as stated in Remark~\ref{rem:UequalsV}, this is still comfortably enough to give Theorem~\ref{thm:null} (any exponent greater than $-1$ in the tail bound will do).
We omit these additional technical details, as the case $U=V$ is outside our main interest.
\end{remark}

\appendix

\section{Recurrence in one dimension}

We use a Lyapunov function method with function $f(x) = \log ( 1 + |x| )$.

\begin{lemma}
\label{l:zero_drift_implies_recurrence}
 Suppose that $X$ is a discrete-time, time-homogeneous Markov process on $\X \subseteq \R$.
Suppose that for some $p>2$ and $v >0$,
\begin{align*} \sup_{x \in \X} \Exp [ ( X_{n+1} - X_n )^p \mid X_n = x] & < \infty ; \\
\inf_{x \in \X} \Exp [ ( X_{n+1} - X_n )^2 \mid X_n = x] & \geq v .\end{align*}
Suppose also that for some bounded set $A \subset \R$, 
\[ \Exp [   X_{n+1} - X_n   \mid X_n = x] = 0, \text{ for all } x \in \X \setminus A .\]
Then there exists a bounded set $A' \subset \R$ for which
\[\Exp [ f(X_{n+1} ) - f (X_n) \mid X_n = x ] \leq 0, \text{ for all } x \in \X \setminus A'  .\]
\end{lemma}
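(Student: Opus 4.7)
The plan is to Taylor-expand $f(x+\Delta)-f(x)$ about $x$, truncate the jump $\Delta = X_{n+1}-X_n$ at a suitable power of $|x|$, and exploit the zero-drift and variance lower-bound hypotheses so that the second-order term dominates. For $|x|$ large, $f'(x) = \sign(x)/(1+|x|)$ and $f''(x) = -1/(1+|x|)^2$, so heuristically
\[
\Exp[f(x+\Delta)-f(x)\mid X_n=x] \approx \frac{\sign(x)\,\Exp_x[\Delta]}{1+|x|} - \frac{\Exp_x[\Delta^2]}{2(1+|x|)^2}.
\]
For $x \notin A$ the first-order term vanishes by the zero-drift assumption, while the second-order term is at most $-v/(2(1+|x|)^2)$ by the variance lower bound. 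So the drift is negative, provided the error terms from the Taylor expansion can be shown to be $o(|x|^{-2})$.

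For the rigorous version, fix $\beta \in (0,1)$ to be chosen later, set $A_x := \{|\Delta| \leq |x|^\beta\}$, and take $|x|$ large enough that on $A_x$ the points $x$ and $x+\Delta$ lie on the same side of the origin. On $A_x$, Taylor's theorem with Lagrange remainder applied to $\log(1+|x+\Delta|)$ yields, in analogy with Lemma~\ref{lemma1},
\[
(f(x+\Delta) - f(x))\,\2{A_x} = \left( \frac{\sign(x)\,\Delta}{1+|x|} - \frac{\Delta^2}{2(1+|x|)^2}\bigl(1 + O(|x|^{\beta-1})\bigr) + O\!\left(\frac{|\Delta|^3}{(1+|x|)^3}\right) \right)\2{A_x}.
\]
For $x\notin A$, $\Exp_x[\Delta]=0$, so $|\Exp_x[\Delta \2{A_x}]| = |\Exp_x[\Delta \2{A_x^{\rc}}]| \leq |x|^{-\beta(p-1)} \Exp_x[|\Delta|^p]$, and $\Exp_x[\Delta^2 \2{A_x}] \geq v - |x|^{-\beta(p-2)}\Exp_x[|\Delta|^p] \geq v/2$ for $|x|$ large. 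On $A_x^{\rc}$ I will use the elementary splitting
\[
|f(x+\Delta)-f(x)| \leq \frac{|\Delta|}{1+|x|} + f(x)\2{|x+\Delta| \leq |x|},
\]
which together with Markov's inequality gives
\[
\Exp_x\bigl[|f(x+\Delta)-f(x)|\2{A_x^{\rc}}\bigr] \leq \frac{B\,|x|^{-\beta(p-1)}}{1+|x|} + B\,f(x)\,|x|^{-\beta p}.
\]

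Collecting terms, the drift of $f$ equals $-\Exp_x[\Delta^2\2{A_x}]/(2(1+|x|)^2)$ plus errors of orders $|x|^{\beta-3}$, $|x|^{-1-\beta(p-1)}$, and $|x|^{-\beta p}\log|x|$. Choosing any $\beta \in (\max\{2/p,\,1/(p-1)\},\,1)$, which is non-empty since $p>2$, makes each error term $o(|x|^{-2})$, so the drift is dominated by the $-v/(4(1+|x|)^2)$ contribution and is $\leq 0$ for $|x| \geq r$ sufficiently large. Taking $A' := A \cup \{x : |x| \leq r\}$ yields the claim. The main obstacle is handling the contribution from $A_x^{\rc}$: when $x+\Delta$ has the opposite sign to $x$, $f(x+\Delta)-f(x)$ can be as negative as $-\log(1+|x|)$, so we need $\Pr_x[|\Delta|>|x|^\beta]\log|x| = o(|x|^{-2})$, which is precisely what forces $\beta > 2/p$ and makes use of the strict inequality $p>2$.
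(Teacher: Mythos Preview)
Your proof is correct and follows essentially the same strategy as the paper: split on the size of the jump, Taylor-expand on the good event, and bound the complement crudely using the $p$-th moment.

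The execution differs in two minor ways worth noting. First, the paper truncates at $|x|$ rather than $|x|^\beta$, which removes the need to optimise $\beta$: on $E_x=\{|\Delta|<|x|\}$ the ratio $\sign(x)\Delta/(1+|x|)$ lies in $(-1,1]$, and the paper uses the elementary inequality $\log(1+y)\le y-\tfrac16 y^2$ directly, rather than a Taylor expansion with Lagrange remainder. Second, on the complement the paper uses the bound $|f(x+\Delta)-f(x)|\le \log(1+|\Delta|)+\log(1+2|\Delta|)$ (available because $|\Delta|\ge|x|$ on $E_x^{\rc}$), which gives the required $o(|x|^{-2})$ immediately. Your bound $|f(x+\Delta)-f(x)|\le |\Delta|/(1+|x|)+f(x)\2{|x+\Delta|\le|x|}$ is also valid but forces the extra constraint $\beta>2/p$; in the end this costs nothing since $p>2$. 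Your route has the advantage of being parallel to the proof of Lemma~\ref{lemma1}, while the paper's is slightly shorter.
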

\begin{proof}
Write $\Delta = X_{1} - X_0$ and $E_x = \{ | \Delta | < | x | \}$.
We compute 
\begin{align*}
\Exp [ f(X_{n+1} ) - f (X_n) \mid X_n = x ] & = \Exp_x [ ( f(x + \Delta ) - f (x) ) \2 { E_x } ] \\
&{} ~~ {}  + \Exp_x [ ( f(x + \Delta ) - f (x) ) \2 { E_x^\rc } ] .
\end{align*}
On $\{  | \Delta | < | x | \}$ we have that $x$ and $x + \Delta$ have the same sign, so
\begin{align*}
& {}~~~{} \Exp_x [ ( f(x + \Delta ) - f (x) ) \2 { E_x } ] \\
& = \Exp_x \left[ \log \left( \frac{ 1 + | x + \Delta |}{1+|x|} \right) \2 { E_x }  \right]  \\
& = \Exp_x \left[ \log \left( 1 + \frac{ \Delta \sign(x)}{1+|x|} \right) \2 { E_x }  \right] \\
& \leq \left( \frac{ \sign(x)}{1+|x|} \right) \Exp_x [  \Delta \2 { E_x } ] - \frac{1}{6} ( 1+|x| )^{-2} \Exp_x [  \Delta^2 \2 { E_x } ], 
\end{align*}
using the   inequality $\log (1+ y) \leq y - \frac{1}{6} y^2$ for all $-1 < y \leq 1$.
Here, since $ \Exp_x [  \Delta ] =0$ for $x \not\in A$, 
\[ | \Exp_x [  \Delta \2 { E_x } ] |    \leq  \Exp_x [ | \Delta | \2 { E^\rc_x } ]    \leq \Exp_x [ | \Delta |^p |x|^{1-p} ] = o(|x|^{-1} ) .\]
Similarly,
\[  \Exp_x [  \Delta^2 \2 { E_x } ] \geq v - \Exp_x [  \Delta^2 \2 { E^\rc_x } ] \geq v-
o(1) .\]
(Note that here, and in what follows, our notation follows the convention as
described by \eqref{eqn:big-O}; consequently, in one dimension the error terms
are understood to be uniform as either $x \to +\infty$, or $x \to -\infty$.)
Finally we estimate the term
\begin{equation*}
\left| \Exp_x [ ( f(x + \Delta ) - f (x) ) \2 { E_x^\rc } ] \right| \leq \Exp_x \left[
  \left( \log ( 1 + | \Delta | )  + \log ( 1 + 2 |\Delta | ) \right) \2 { E_x^\rc }
\right] .
\end{equation*}
Here,
\begin{align*}
 \log ( 1 + 2 |\Delta | ) \2 { E_x^\rc } &= \log ( 1 + 2 |\Delta | )
                                           |\Delta|^p|\Delta|^{-p} \2 { E_x^\rc }  \\ 
&\leq |x|^{-p} \log ( 1 +2 |x| ) |\Delta|^p,
\end{align*}
for all $x$ with $|x|$ greater than some $x_0$ sufficiently large, using the fact that $y
\mapsto y^{-p}\log(1+2y)$ is eventually decreasing. It follows that
\begin{equation*}
\left| \Exp_x [ ( f(x + \Delta ) - f (x) ) \2 { E_x^\rc } ] \right| \leq 2|x|^{-p} \log (
1 +2 |x| )\Exp_x[|\Delta|^p] = o(|x|^{-2}).
\end{equation*}
Combining these calculations we obtain
\begin{align*}
\Exp [ f(X_{n+1} ) - f (X_n) \mid X_n = x ] & \leq \left( \frac{\sign(x)}{1 +|x|} \right) o ( |x |^{-1} ) - \frac{1}{6} (1 + |x| )^{-2} ( v - o(1) )
+ o( |x|^{-2} ) \\
& \leq - \frac{v}{6}  ( 1+|x| )^{-2} + o (|x|^{-2} ) , \end{align*}
which is negative for all $x$ with $|x|$ sufficiently large.
\end{proof}

\begin{proof}[Proof of Theorem~\ref{t:zero_drift_implies_recurrence}.]
Under assumptions \eqref{ass:moments}, \eqref{ass:zero_drift} and \eqref{ass:unif_ellip}, the
hypotheses of Lemma~\ref{l:zero_drift_implies_recurrence} are satisfied, so that for some
$x_0 \in \R_+$,
\[
\Exp [ f(X_{n+1} ) - f (X_n) \mid X_n = x ] \leq 0, \text{ for all $x \in \X$ with $|x|
  \geq x_0$},
\]
where $f(x) = \log(1+|x|)$. 

We note that assumption \eqref{ass:moments} implies that $\Exp[|X_n|] < \infty$ for all
$n$, and therefore $\Exp[f(X_n)] < \infty$ for all $n$.
Let $n_0 \in \N$ and set $\tau = \min \{n \geq n_0: |X_n| \leq x_0 \}$. Let $Y_n = f(X_{n
  \wedge \tau})$.  Then $(Y_n, n \geq n_0)$ is a non-negative supermartingale, and hence
there exists a random variable $Y_\infty \in \R_+$ with $\lim_{n\to\infty}Y_n = Y_\infty$,
a.s.  In particular, this means that 
\[ \limsup_{n\to\infty}f(X_n) \leq Y_\infty, \text{ on } \{\tau = \infty\}. \]
  Setting $\zeta = \sup \{ |x| : x \in \X, f(x) \leq Y_\infty\}$, which satisfies $\zeta<\infty$, a.s.,
 since $f(x) \to \infty$ as $|x| \to
\infty$, it follows that $\limsup_{n\to\infty}|X_n| \leq \zeta$ on $\{\tau = \infty \}$.
However, under assumptions \eqref{ass:moments}, \eqref{ass:zero_drift} and
\eqref{ass:unif_ellip},  Proposition~\ref{lem:lim_sup_infty} implies that
$\limsup_{n\to\infty}|X_n| = +\infty$, a.s., so to avoid contradiction, we must have $\tau <
\infty$, a.s.  In other words,
\[
\Pr\Big[  \inf_{n \geq n_0}|X_n| \leq x_0 \Big] = 1,
\]
and since $n_0$ was arbitrary, it follows that
\[
\Pr\Big[ \bigcap_{n_0 \in \N} \big\{\inf_{n \geq n_0}|X_n| \leq x_0\big\}\Big] = 1,
\]
which gives the result.
\end{proof}

\section*{Acknowledgements}

Part of this work was supported by the Engineering and Physical Sciences Research Council [grant number EP/J021784/1].

An antecedent of this work, concerning only the elliptic random walk in two dimensions, was written down in 2008--9 by MM and AW, who benefited from stimulating discussions with Iain MacPhee (7/11/1957--13/1/2012). The present authors also thank Stas Volkov for a comment that inspired
Remark~\ref*{rem:ellipse}\eqref{rem:Stas}.

\end{document}